\newtheorem{theorem}{Theorem}[section]
\newtheorem{lemma}[theorem]{Lemma}
\newtheorem{proposition}[theorem]{Proposition}
\newtheorem{corollary}[theorem]{Corollary}
\newcommand{\abs}[1]{\ensuremath{|#1|}}
\newcommand{\Abs}[1]{\ensuremath{\left|#1\right|}}
\newcommand{\norm}[2]{\ensuremath{|\!|#1|\!|_{#2}}}
\newcommand{\Norm}[2]{\ensuremath{\left|\!\left|#1\right|\!\right|_{#2}}}
\newcommand{\tr}{\textnormal{tr}}
\newcommand{\braket}[2]{\langle #1 | #2 \rangle}
\renewcommand{\d}[1]{\ensuremath{\textnormal{d}#1}}
\newcommand{\id}{\ensuremath{\mathds{1}}}
\newcommand{\cA}{\mathcal{A}}
\newcommand{\cC}{\mathcal{C}}
\newcommand{\cM}{\mathcal{M}}
\newcommand{\cO}{\mathcal{O}}
\begin{document}
\title{Eigenvalue estimates for the resolvent of a non-normal matrix}
\author{Oleg Szehr}
\email{oleg.szehr@posteo.de}
\affiliation{Department of Mathematics, Technische Universit\"{a}t M\"{u}nchen, 85748 Garching, Germany}

\date{\today}

\begin{abstract} We investigate the relation between the spectrum of a non-normal matrix and the norm of its resolvent. We provide spectral estimates for the resolvent of matrices whose largest singular value is bounded by $1$ (so-called Hilbert space contractions) and for power-bounded matrices. In the first case our estimate is optimal and we present explicit matrices that achieve equality in the bound. This result recovers and generalizes previous estimates obtained by E.B.~Davies and B.~Simon in the study of orthogonal polynomials on the unit circle. In case of power-bounded matrices we achieve the strongest estimate so far. Our result unifies previous approaches, where the resolvent was estimated in certain restricted regions of the complex plane.
To achieve our estimates we relate the problem of bounding the norm of a function of a matrix to a Nevanlinna-Pick interpolation problem in a corresponding function space. In case of Hilbert space contractions this problem is connected to the theory of compressed shift operators to which we contribute by providing explicit matrix representations for such operators. Finally, we apply our results to study the sensitivity of the stationary states of a classical or quantum Markov chain with respect to perturbations of the transition matrix.
\end{abstract}
\keywords{Resolvent, Non-normal matrix, Markov chain\\
{2010 Mathematics Subject Classification: Primary: 15A60; Secondary: 65F35, 65J05 }}

\maketitle

\tableofcontents
\section{Introduction}
The contribution of this article is to provide new estimates on the norm of the resolvent of a matrix $A$ and to prove their optimality under certain conditions. We derive bounds of the form
\begin{align}
\Norm{(\zeta-A)^{-1}}{}\leq \Phi(\zeta,n,\sigma(A)),\label{todo}
\end{align}
where $\Phi$ is a function of $\zeta\in\mathbb{C}$, the dimension $n$ and the spectrum $\sigma(A)$ of $A$.
In the first part of the article, (cf.~Section~\ref{contractions}) we assume that the largest singular value (the spectral norm) of $A$ is bounded by $1$ i.e.~$\Norm{A}{\infty}\leq 1$. Note that this can always be achieved by a suitable normalization. Under this assumption we obtain optimal bounds for $\zeta\in\mathbb{C}-\sigma(A)$ and present explicit matrices that establish equality in \eqref{todo}.
Thus we identify the relation between the localization of the spectrum of $A$ and the norm of its resolvent. 
In the second part (cf.~Section~\ref{powerbounds}) we study~\eqref{todo} under the assumption that each power of $A$ can be bounded with respect to \emph{any} given norm by the same constant, $\sup_{k\geq0}\Norm{A^k}{}\leq C$. In this case we derive the strongest estimates so far.

The problem of finding good functions $\Phi$ in \eqref{todo} was studied  in the literature before~\cite{BS,Rachid1,AC, LOTS}. Our approach is based on the theory of certain (Hilbert/ Banach) function spaces. We associate to a given class of matrices $\Gamma$ a certain Banach algebra $\cA$ of functions and instead of working with matrices directly we estimate the norm of a representative function in the function algebra. A key role is played by inequalities of the type
\begin{align}
\Norm{f(A)}{}\leq C\Norm{f}{\cA}\label{fcalc},
\end{align}
which relate for a given $A\in\Gamma$ the norm $\Norm{f(A)}{}$ to the norm of $f$ in $\cA$.
At first glance this appears to be of little use since the right hand side no longer depends on $A$. However, it is possible to exploit spectral properties of $A$ to significantly strengthen the inequality in~\eqref{fcalc}. Let $m_A$ be the minimal polynomial of $A$.  For any $f,g\in\cA$ we have then that $\Norm{(f+m_Ag)(A)}{\cA}=\Norm{f(A)}{\cA}$ and an application of \eqref{fcalc} reveals that for all $g\in\cA$ we have $\Norm{f(A)}{}\leq C\Norm{f+m_Ag}{\cA}$. This relates the problem of bounding $\Norm{f(A)}{}$ to the problem of finding the least norm function $f+m_Ag$ in $\cA$, which is equivalent to a Nevanlinna-Pick interpolation problem in $\cA$~\cite{AC}. If $\Norm{A}{\infty}\leq1$ the resulting interpolation problem can be solved using an operator theoretic approach pioneered by D.~Sarason~\cite{Sarason,interpol}. This approach is intrinsically connected to the theory of compressed shift operators on Hardy space. We contribute to this theory by providing a framework that allows us to compute explicit matrix representations for functions of model operators.
In case that $A$ is power-bounded we choose a rational approximation function in $\cA$ and bound its norm to achieve our result.

Bounds on the norm of a resolvent occur in various situations in pure and applied mathematics. For example in operator theory, when constructing a functional calculus~\cite{AC}
$$f\mapsto f(X)=\frac{1}{2\pi i}\int_\gamma f(\zeta)(\zeta-X)^{-1}\d\zeta.$$
In the theory of orthogonal polynomials, when studying the location of zeros of random orthogonal polynomials~\cite{BS}. In computational linear algebra the following are classical problems that can be approached through appropriate estimates for $\Norm{(\zeta-A)^{-1}}{}$.
\begin{enumerate}
\item To analyze the stability of solutions $x$ of the matrix equation $Ax-\zeta x=b$ under perturbations in $b$ and $A$, see \cite{LOTS}.
\item To study whether an approximate eigenvalue $\zeta$ of $A$ (in the sense that $\Norm{Ax-\zeta x}{}\leq\varepsilon\norm{x}{}$ for some vector $x\neq0$) is close to an actual eigenvalue of $A$, see \cite{wielandt,henrici,LOTS}.
\item To estimate the distance of the spectrum of a matrix $B$ to the spectrum of a matrix $A$ in terms of $B-A$, see \cite{phillips,henrici,bandtlow}.
\end{enumerate}
Our resolvent bounds are stronger than the ones used for example in~\cite{phillips} to obtain estimates on the spectral variation of non-normal matrices. 
In Section~\ref{Chain} we apply our estimate for power-bounded matrices to study the sensitivity of stationary states of a classical or quantum Markov chain under perturbations of the transition matrix. We recover known stability results for classical Markov chains and prove new estimates in the quantum case. A similar approach, based on the power-boundedness of the transition matrix, was previously applied in \cite{szrewo} to investigate spectral convergence properties of classical and quantum Markov chains.

\section{Preliminaries}\label{PREL}
We will take a function space based approach to the problem of bounding the norm of the resolvent of a certain matrix. This section lays down the required definitions and basic results.
\subsection{Notation}
We denote by $\cM_n$ the set of $n\times n$ matrices with complex entries. For $A\in\cM_n$ we denote by $\sigma(A)$ its spectrum and by $m$ its minimal polynomial. We write $\abs{m}$ for the degree of $m$. To the minimal polynomial $m$ we associate the Blaschke product
\begin{align*}
B(z):=\prod_{i}\frac{z-\lambda_i}{1-\bar{\lambda}_iz}.
\end{align*}
The product is taken over all $i$ such that (respecting multiplicities) the corresponding linear factor $z-\lambda_i$ occurs in the minimal polynomial $m$. Thus, the numerator of $B$ as defined here is exactly the associated minimal polynomial.

We denote by $\Norm{A}{}$ \emph{any} particular norm of $A$ while the $\infty$-norm is defined by
\begin{align*}
\Norm{A}{\infty}=\sup_{\norm{v}{2}=1}\Norm{Av}{2},
\end{align*}
where $\Norm{v}{2}^2=\sum_i\abs{v_i}^2$ is the usual Euclidean norm. That means $\Norm{A}{\infty}$ simply denotes the largest singular value of $A$. We will slightly abuse nomenclature and call matrices with 
\begin{align*}
\Norm{A}{\infty}\leq1
\end{align*}
Hilbert space contractions, although of course the underlying space always has finite dimension. Similarly, the class of $A\in\cM_n$ 
with
\begin{align*}
\sup_{k\geq0}\norm{A^k}{}\leq C<\infty
\end{align*}
will be called Banach space power-bounded operators with respect to $\Norm{\cdot}{}$ and constant $C$. (Note that here the norm is general.)

Let $\mathbb{D}=\{z\in\mathbb{C}\:|\:\abs{z}<1\}$ denote the open unit disk in the complex plane and $\bar{\mathbb{D}}$ its closure. The space of analytic functions on $\mathbb{D}$ is denoted by $Hol(\mathbb{D})$. The Hardy spaces considered here are
\begin{align*}
H_2:=\big\{f\in Hol(\mathbb{D})| \Norm{f}{H_2}^2:=\sup_{0\leq r<1}\frac{1}{2\pi}\int_0^{2\pi}\abs{f(re^{i\phi})}^2\d{\phi}<\infty\big\},
\end{align*}
and 
\begin{align*}
&H_\infty:=\big\{f\in Hol(\mathbb{D})| \Norm{f}{H_\infty}:=\sup_{z\in\mathbb{D}}\abs{f(z)}<\infty\big\}.
\end{align*}
The $H_2$-norm can be written in terms of the Taylor coefficients of the analytic function $f$. We write $f(z)=\sum_{k\geq0}\hat{f}(k)z^k$ and use Plancherel\rq{}s identity to conclude that
\begin{align*}
\sup_{0\leq r<1}\frac{1}{2\pi}\int_0^{2\pi}\abs{f(re^{i\phi})}^2\d{\phi}=\sum_{k\geq0}\abs{\hat{f}(k)}^2.
\end{align*}
Thus, $f\in Hol(\mathbb{D})$ is in $H_2$ if and only if $\sum_{k\geq0}\abs{\hat{f}(k)}^2<\infty$. The Wiener algebra is defined as the subset of $Hol(\mathbb{D})$ of absolutely convergent Taylor series,
\begin{align*}
W:=\{f=\sum_{k\geq0}\hat{f}(k)z^k| \Norm{f}{W}:=\sum_{k\geq0}\abs{\hat{f}(k)}<\infty\}.
\end{align*}
\subsection{Model spaces and operators}
Let $A\in\cM_n$ with $\sigma(A)\subset\mathbb{D}$ and let $B$ be the Blaschke product associated to the minimal polynomial of $A$.
We define the $\abs{m}$-dimensional \emph{model space}
\begin{align*}
K_B:=H_2\ominus BH_2:=H_2\cap(BH_2)^\bot,
\end{align*}
where we employ the usual scalar product from the Hilbert space $L^2(\partial\mathbb{D})$,
\begin{align*}
\braket{f}{g}:=\int_{\partial\mathbb{D}}f(z)\overline{g(z)}\:\frac{\abs{\d z}}{2\pi}.
\end{align*}

If the zeros $\{\lambda_i\}_{i=1,...,\abs{m}}$ of $B$ are distinct (that is $A$ can be diagonalized) it is not difficult to verify that $K_B$ is spanned by the \emph{Cauchy kernels}
\begin{align*}
K_B=span\left\{\frac{1}{1-\bar{\lambda}_iz}\right\}_{i=1,...,\abs{m}}.
\end{align*}
Thus $K_B$ is a space of rational functions $f$ of the form
\begin{align*}
f(z)=\frac{p(z)}{\prod_i(1-\bar{\lambda_i}z)},
\end{align*}
where $p(z)$ is a polynomial of degree at most $\abs{m}-1$. If the zeros of $B$ are not distinct the above remains valid but the Cauchy kernels have to be replaced by
\begin{align*}
\frac{z^{k-1}}{(1-\bar{\lambda}_iz)^{k}},\quad 1\leq k\leq k_i,
\end{align*}
where $k_i$ denotes the multiplicity of $\lambda_i$. In our consecutive proofs, however, we omit this case and assume that $A$ is diagonalizable. This does not result in any difficulties since upper bounds obtained in the special case extend by continuity to bounds for non-diagonalizable matrices. The assumption that $A$ can be diagonalized is not principal; virtually all computations in the manuscript can be carried out in the more general case. We avoid non-diagonalizable $A$ and
rely on continuity only for notational convenience.

One natural orthonormal basis for $K_B$ is the Malmquist-Walsh basis $\{e_k\}_{k=1,...,\abs{m}}$ with {(\cite{TS}, page 117)}
\begin{align*}
e_k(z):=\frac{(1-\abs{\lambda_k}^2)^{1/2}}{1-\bar{\lambda}_k z}\prod_{i=1}^{k-1}\frac{z-\lambda_i}{1-\bar{\lambda}_iz},
\end{align*}
where, as it will remain throughout the manuscript, the empty product is defined to be $1$ i.e.
\begin{align*}
e_1(z)=\frac{(1-\abs{\lambda_1}^2)^{1/2}}{1-\bar{\lambda}_1z}.
\end{align*}

The \emph{model operator} $M_B$ acts on $K_B$ as
\begin{align*}M_B:\:K_B&\rightarrow K_B\\
f&\mapsto M_B(f):=P_B(zf),
\end{align*}
where $P_B$ denotes the orthogonal projection on $K_B$. In other words, $M_B$ is the compression of the multiplication operation by $z$ to the model space $K_B$ (see \cite{TS} for a detailed discussion of model operators and spaces). As multiplication by $z$ has operator norm $1$ it is clear that $M_B$ is a Hilbert space contraction. Moreover, it is not hard to show that the eigenvalues of $M_B$ are exactly the zeros of the corresponding Blaschke product (see \cite{OF}, page 228 and Proposition~\ref{modelcomp} in the article at hand).

\subsection{Spectral bounds on the norm of a function of a matrix}\label{specnr}

This subsection contains a brief outline of methods to obtain spectral bounds on a function of a matrix. For a more detailed account see \cite{AC,TS,OF} and the references therein. Suppose that $f$ is holomorphic on a domain containing all eigenvalues of $A$ and let $\gamma$ be a smooth curve in this domain that encloses the eigenvalues. The matrix $f(A)$ is defined by the Dunford-Taylor integral~\cite{kato}
\begin{align*}
f(A)=\frac{1}{2\pi i}\int_\gamma f(\zeta)(\zeta-A)^{-1}\d\zeta.
\end{align*} 

It is easily seen that if $f(z)=\sum_{k=0}^l a_kz^k$ is a polynomial then $f(A)=\sum_{k=0}^l a_kA^k$ and that the correspondence $f\mapsto f(T)$ is an algebra homomorphism from the algebra of holomorphic functions (on the given domain) to $\cM_n$ i.e $(f+g)(T)=f(T)+g(T)$ and $(fg)(T)=f(T)g(T)$~(see \cite{kato}, Chapter I.6). A unital Banach algebra $\cA$ with elements in $Hol(\mathbb{D})$ will be called a function algebra if

 i) $\cA$ contains all polynomials and $\lim_{n\rightarrow\infty}\Norm{z^n}{\cA}^{1/n}=1$ and 

ii) $(f\in \cA,\:\lambda\in\mathbb{D},\: f(\lambda)=0)$ implies that  $\frac{f}{z-\lambda}\in \cA$. \newline{}
%
%
Following the conventions of \cite{AC} we say that a set of matrices $\Gamma$ obeys an $\cA$ functional calculus with constant $C$ if 
\begin{align*}
\Norm{f(A)}{}\leq C\Norm{f}{\cA},
\end{align*}
holds for any $A\in\Gamma$ and $f\in\cA$. Here $\Norm{f}{\cA}$ denotes the norm of $f$ in $\cA$.
%
%
%
Clearly, this is only possible if all eigenvalues of $A$ are contained in $\bar{\mathbb{D}}$.
For us, two instances of such inequalities will be important. In the first example we consider Hilbert space contractions, while the second one treats power-bounded Banach space operators.

i) The family of Hilbert space contractions $\Gamma=\{A\in \cM_n| \Norm{A}{\infty}\leq1\}$ is related to an $H_\infty$ functional calculus, since by von Neumann\rq{}s inequality \cite{Nagy,LOTS} we have for any $f$ in the disk algebra $H_\infty\cap\cC(\bar{\mathbb{D}})$ (the set of bounded holomorphic functions on $\mathbb{D}$ that admit a continuous extension to the boundary) and $A\in\Gamma$ with $\sigma(A)\subset\mathbb{D}$
\begin{align*}
\Norm{f(A)}{\infty}\leq\Norm{f}{H_\infty}.
\end{align*}

ii) Consider a family $\Gamma=\{A\in \cM_n| \Norm{A^k}{}\leq C\ \forall k\in\mathbb{N}\}$ of Banach space operators that are power bounded by some constant $C<\infty$. This family admits a Wiener algebra functional calculus since for any $f\in W$ and $A\in\Gamma$
\begin{align*}
\Norm{f(A)}{}=\Norm{\sum_{k\geq0}\hat{f}(k)A^k}{}\leq \sum_{k\geq0}\abs{\hat{f}(k)}\Norm{A^k}{}\leq C\sum_{k\geq0}\abs{\hat{f}(k)}=C\Norm{f}{W}
\end{align*}
holds.\\

At first glance, the inequalities of $i)$ and $ii)$ seem to be of little use when it comes to finding spectral bounds on $\Norm{f(A)}{}$ since the obtained upper bounds do not depend on $A$ anymore. To obtain better estimates one can rely on the following insight.  Instead of considering the function $f$ directly, we add multiples of $m$ (or any other annihilating polynomial) to this function and consider $h=f+mg,\:g\in \cA$ instead of $f$. It is immediate that $\Norm{f(X)}{}=\Norm{h(X)}{}$. The following simple but crucial lemma summarizes this point:

{\begin{lemma}[\cite{AC} Lemma 3.1]\label{wiener} Let $m\neq0$ be a polynomial and let $\Gamma$ be a set of matrices that obey an $\cA$ functional calculus with constant $C$ and that satisfy $m(A)=0\ \forall A\in\Gamma$. Then
%
%
\begin{align*}
\Norm{f(A)}{}\leq C \Norm{f}{\cA/m\cA},\ \forall A\in\Gamma,
\end{align*}
where $\Norm{f}{\cA/m\cA}=\inf{\{\norm{h}{\cA}|\ h=f+mg,\:g\in \cA\}}$.
\end{lemma}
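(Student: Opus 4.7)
The plan is to observe that for any $g \in \cA$, the function $h := f + mg$ represents the same matrix as $f$ does, because $m$ annihilates every $A \in \Gamma$. Hence the $\cA$-calculus bound applied to $h$ gives a bound for $\Norm{f(A)}{}$, and optimising over $g$ yields the quotient norm on the right.

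In more detail, I would proceed as follows. First, I note that $h = f + mg \in \cA$: the polynomial $m$ belongs to $\cA$ by axiom i) in the definition of a function algebra, $g$ belongs to $\cA$ by hypothesis, and $\cA$ is a (Banach) algebra, so the product and sum stay in $\cA$. Second, I invoke the fact, recalled in Subsection~\ref{specnr}, that $f \mapsto f(A)$ is an algebra homomorphism from the holomorphic functional calculus to $\cM_n$. Consequently
\begin{align*}
h(A) \;=\; f(A) + m(A)\,g(A) \;=\; f(A),
\end{align*}
since by hypothesis $m(A) = 0$ for every $A \in \Gamma$.

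With this identification in hand the rest is immediate. By the assumed $\cA$ functional calculus with constant $C$,
\begin{align*}
\Norm{f(A)}{} \;=\; \Norm{h(A)}{} \;\leq\; C\,\Norm{h}{\cA} \;=\; C\,\Norm{f+mg}{\cA}.
\end{align*}
Since $g \in \cA$ was arbitrary, taking the infimum over all such $g$ gives
\begin{align*}
\Norm{f(A)}{} \;\leq\; C\,\inf_{g\in\cA}\Norm{f+mg}{\cA} \;=\; C\,\Norm{f}{\cA/m\cA},
\end{align*}
which is the desired inequality.

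There is no real obstacle here; the content of the argument is purely formal, and the only points that require a little care are (a) checking that $mg$ lies in $\cA$, which is guaranteed by the Banach-algebra structure of $\cA$ together with axiom i), and (b) using the multiplicativity of the holomorphic functional calculus to conclude $(mg)(A) = m(A)g(A)$. Both of these are standard and have been recorded earlier in the preliminaries. The real substance of the paper lies in how this lemma is later combined with concrete choices of $\cA$ (the disk algebra, the Wiener algebra) and with an explicit solution of the resulting extremal problem $\Norm{f}{\cA/m\cA}$, which is where the Nevanlinna--Pick machinery enters.
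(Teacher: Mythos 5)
Your proposal is correct and is essentially identical to the paper's own (one-line) proof: both observe that $(f+mg)(A)=f(A)$ since $m(A)=0$, apply the $\cA$ functional calculus to $f+mg$, and take the infimum over $g$. The extra care you take in checking that $mg\in\cA$ and that the functional calculus is multiplicative is sound but routine.
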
}
\begin{proof} For any $g\in A$ we have that $\Norm{f(A)}{}=\Norm{(f+mg)(A)}{}\leq C \Norm{f+mg}{\cA}$.
\end{proof}
If $\sigma(A)\subset\mathbb{D}$ (and $A$ can be diagonalized) it follows directly from the definition of the function algebra (see also \cite{AC}, Section 3.1 (iii) or \cite{effe}, Section 1.2 P4) that 
$$\Norm{f}{\cA/m\cA}=\inf\{\Norm{g}{\cA}\:|\:g\in\cA,\:g(\lambda_i)=f(\lambda_i)\:\forall\lambda_i\in\sigma(A)\},$$
which is a Nevanlinna-Pick type interpolation problem in $\cA$.
If the eigenvalue $\lambda_i$ carries a multiplicity $k_i>1$ in $m$ the above remains valid but at $\lambda_i$ the first $k_i-1$ derivatives of $f$ and $g$ must coincide.
Since for $\sigma(A)\subset{\mathbb{D}}$ the Blaschke product is holomorphic on a set containing $\bar{\mathbb{D}}$ we can define $\Norm{f}{\cA/B\cA}$ as in Lemma~\ref{wiener} and note (\cite{effe}, Lemma 3.1) that as before
$$\Norm{f}{\cA/B\cA}=\inf\{\Norm{g}{\cA}\:|\:g\in\cA,\:g(\lambda_i)=f(\lambda_i)\:\forall\lambda_i\in\sigma(A)\}.$$
In the special case $\cA=H_\infty$ it is possible to evaluate $\Norm{f}{H_\infty/BH_\infty}$ using Sarason's approach to the Nevanlinna-Pick problem~\cite{Sarason,interpol} or the Commutant lifting theorem of B.~Sz.-Nagy and C.~Foia\c{s}~\cite{FF,NagyF,interpol}.
\begin{lemma}[\cite{AC}~Theorem 3.12,~\cite{OF}~Theorem 3.1.11]\label{sarasonnorm}
For any $f\in H_\infty$ it holds that
\begin{align*}
\Norm{f}{H_\infty/BH_\infty}=\Norm{f(M_B)}{\infty}.
\end{align*}
\end{lemma}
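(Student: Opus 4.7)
The plan is to establish the equality by proving both inequalities separately. The direction $\Norm{f(M_B)}{\infty} \leq \Norm{f}{H_\infty/BH_\infty}$ is elementary and reduces to Lemma~\ref{wiener} applied to the singleton family $\{M_B\}$, while the reverse inequality is the substance of Sarason's theorem and rests on the commutant lifting theorem of Sz.-Nagy and Foia\c{s}.

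For the upper bound I would verify that $\Gamma = \{M_B\}$ satisfies the hypotheses of Lemma~\ref{wiener} with $\cA = H_\infty$ and annihilating function $B$. As noted after the definition of the model operator, $M_B$ is a Hilbert space contraction (being a compression of multiplication by $z$), so by von Neumann's inequality it obeys an $H_\infty$ functional calculus with constant $1$. Moreover $B(M_B) = 0$: the spectrum of $M_B$ equals the zero set $\{\lambda_i\}$ of $B$, so the polynomial numerator $m$ satisfies $m(M_B) = 0$ (invoking the paper's blanket diagonalizability convention), while the denominator $\prod_i(1-\bar{\lambda}_i z)$ is holomorphic and nonvanishing on a neighborhood of $\bar{\mathbb{D}}$ and hence invertible when evaluated at $M_B$. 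Lemma~\ref{wiener} then yields $\Norm{f(M_B)}{\infty} \leq \Norm{f}{H_\infty/BH_\infty}$.

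For the reverse inequality I would set $T := f(M_B)$ and observe that $T$, arising from the $H_\infty$ calculus of $M_B$, commutes with $M_B$ on $K_B$. Commutant lifting then produces $\tilde{f} \in H_\infty$ with $\Norm{\tilde{f}}{H_\infty} = \Norm{T}{\infty}$ such that the compression of multiplication by $\tilde{f}$ from $H_2$ to $K_B$ equals $T$; equivalently $\tilde{f}(M_B) = f(M_B)$. Hence $(f-\tilde{f})(M_B) = 0$, and since $\sigma(M_B) = \{\lambda_i\}$ this forces $f - \tilde{f}$ to vanish on the zeros of $B$, so $f - \tilde{f} \in BH_\infty$. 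Consequently $\Norm{f}{H_\infty/BH_\infty} \leq \Norm{\tilde{f}}{H_\infty} = \Norm{f(M_B)}{\infty}$.

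The main obstacle is the reverse inequality, which invokes the nontrivial commutant lifting theorem and requires a clean identification of the kernel of the $H_\infty$ calculus of $M_B$ with $BH_\infty$. Under the paper's diagonalizability assumption this identification is transparent, since vanishing on $\sigma(M_B)$ is equivalent to membership in $BH_\infty$; once commutant lifting is imported the proof becomes bookkeeping, and the upper bound is a direct application of the framework already developed via Lemma~\ref{wiener}.
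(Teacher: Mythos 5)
Your argument is correct and follows exactly the route the paper itself points to: the paper offers no proof of Lemma~\ref{sarasonnorm}, citing it as \cite{AC} Theorem 3.12 and \cite{OF} Theorem 3.1.11 and remarking that it rests on Sarason's approach, i.e.\ the commutant lifting theorem. Your two halves --- the easy inequality via von Neumann's inequality together with $B(M_B)=0$ and the extension of Lemma~\ref{wiener} to the quotient by $B\cA$, and the reverse inequality via a norm-preserving lifting $\tilde{f}$ of $f(M_B)$ with $f-\tilde{f}\in BH_\infty$ --- constitute the standard proof of that cited result, so there is nothing to compare beyond noting that you have supplied the argument the paper delegates to the literature.
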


\section{Hilbert space contractions}\label{contractions}
Spectral bounds on the resolvent of a Hilbert space contraction were derived in \cite{BS}. The authors provide an upper bound in terms of a certain Toeplitz matrix, compute the norm of this matrix and present a sequence of matrices that approaches their upper bound. The following theorem summarizes the basic three assertions from the discussion of Hilbert spaces contractions in \cite{BS}.
{\begin{theorem}[\cite{BS,LOTS}]\label{SD} \leavevmode
\begin{enumerate}
\item Let $A$ be an $n\times n$ matrix with $\norm{A}{\infty}\leq1$ and $1\notin\sigma(A)$. Then 
\begin{align*}
\Norm{(\id-A)^{-1}}{\infty}\leq\frac{\Norm{M_n}{\infty}}{\min_{\lambda_i\in\sigma{(A)}}\abs{1-\lambda_i}},
\end{align*}
with the $n\times n$ matrix 
\begin{align*}
M_n:=\begin{pmatrix}
 1 & 0 & \hdots &  0\\
2 & 1 & \ddots & \vdots\\
\vdots & \ddots & \ddots & 0\\
2 & \hdots & 2 & 1
\end{pmatrix}.
\end{align*}
\item
It holds that $\Norm{M_n}{\infty}=\cot{(\frac{\pi}{4n})}$.
\item For any $a\in(0,1)$ there are $n\times n$ matrices $A_n(a)$ with $\Norm{A_n(a)}{\infty}\leq1$ and $\sigma(A)=\{a\}$ such that
\begin{align*}\lim_{a\to1}(1-a)(\id-A_n(a))^{-1}=M_n.\end{align*}
\end{enumerate}
\end{theorem}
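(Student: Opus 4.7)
The plan is to treat the three assertions in order, building on the model-space framework developed in Section~\ref{PREL}.

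For assertion (1), I would first reduce to a bound on the model operator. For $\abs{\zeta}>1$ the function $f_\zeta(z)=(\zeta-z)^{-1}$ lies in $H_\infty$, so combining von Neumann's inequality (Section~\ref{specnr}, item i) with Lemma~\ref{wiener} and Lemma~\ref{sarasonnorm} gives
\begin{equation*}
\Norm{(\zeta-A)^{-1}}{\infty}\leq\Norm{f_\zeta}{H_\infty/BH_\infty}=\Norm{(\zeta-M_B)^{-1}}{\infty}.
\end{equation*}
Both sides depend continuously on $\zeta$ off $\sigma(A)$, so the inequality extends to $\zeta=1$ whenever $1\notin\sigma(A)$. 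It then suffices to show $\Norm{(\id-M_B)^{-1}}{\infty}\leq\Norm{M_n}{\infty}/\min_i\abs{1-\lambda_i}$. For this I would write out the matrix $R$ of $(\id-M_B)^{-1}$ in the Malmquist-Walsh basis $\{e_k\}$ using the explicit triangular form of $M_B$ in this basis (diagonal entries $\lambda_k$ and sub-diagonal entries built from the factors $(1-\abs{\lambda_i}^2)^{1/2}$), and then verify the entry-wise bound $\abs{R_{k\ell}}\leq(M_n)_{k\ell}/\min_i\abs{1-\lambda_i}$. Since $M_n$ has non-negative entries, entry-wise domination implies the corresponding bound on $\Norm{\cdot}{\infty}$.

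For assertion (2), writing $M_n=2L-\id$ with $L$ the $n\times n$ all-ones lower triangular matrix, the Gram matrix $M_n^\ast M_n$ becomes a symmetric tridiagonal (plus one-corner) matrix whose eigenvectors admit a standard sine ansatz analogous to a discrete Laplacian. Identifying the largest eigenvalue and simplifying through a trigonometric identity yields $\Norm{M_n}{\infty}^2=\cot^2(\pi/(4n))$. For assertion (3), I would take $A_n(a)$ to be the matrix of the compressed shift $M_B$ with $B(z)=((z-a)/(1-az))^n$, written in the Malmquist-Walsh basis, so that by construction $\Norm{A_n(a)}{\infty}\leq1$ and $\sigma(A_n(a))=\{a\}$. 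The entries of $(\id-A_n(a))^{-1}$ are then explicit rational functions of $a$, and a direct calculation shows that $(1-a)(\id-A_n(a))^{-1}$ converges entry-wise to $M_n$ as $a\to 1^-$.

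The main technical obstacle is the entry-wise domination in assertion (1). The matrix $R$ involves products of factors $(1-\lambda_i)^{-1}$, $(1-\abs{\lambda_i}^2)^{1/2}$ and Mobius denominators $(1-\bar\lambda_i\lambda_j)^{-1}$, and one must show that each such product is bounded in modulus by $1/\min_i\abs{1-\lambda_i}$ on the diagonal and by $2/\min_i\abs{1-\lambda_i}$ off the diagonal. This amounts to a family of elementary but somewhat delicate inequalities that must hold uniformly in $\lambda_i\in\bar{\mathbb{D}}$, and constitutes the technical heart of the argument.
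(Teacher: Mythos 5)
Your proposal is correct and follows essentially the same route as the paper, which proves assertion 1 by reducing to the model operator $M_B$ and bounding its resolvent entry-wise in the Malmquist--Walsh basis (Theorem~\ref{improve1} and Corollary~\ref{corollary1}), assertion 2 by reducing $\Norm{M_n}{\infty}$ to a tridiagonal trigonometric eigenvalue problem (Proposition~\ref{Top} with $\beta=2$), and assertion 3 by taking $A_n(a)=M_B$ for $B(z)=((z-a)/(1-az))^n$ and letting $a\to1$. The only cosmetic differences are that the paper obtains the entries of $(\zeta-M_B)^{-1}$ by contour integration plus the identity of Lemma~\ref{combi1} rather than by inverting the triangular matrix of Proposition~\ref{modelcomp}, and handles the rational symbol via Lemma~\ref{ratmod} rather than your continuity-from-$\abs{\zeta}>1$ argument.
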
}

In this paper we recover the statements \textit{1}~and \textit{3}~using a unified approach based on the techniques developed in \cite{AC}. Our strategy is to directly compute and bound the entries of the model operator in Malmquist-Walsh basis. Our approach has the advantage that it yields spectral bounds for any $\zeta\in\mathbb{C}-\sigma(A)$ and that the optimality statement \textit{3}~is automatic. Concerning the second point of the theorem we present a technique going back to \cite{eger} in order to compute the norm of Toeplitz matrices of the form
\begin{align}
M_n(\beta):=\begin{pmatrix}
 1 & 0 & \hdots &  0\\
\beta & 1 & \ddots & \vdots\\
\vdots & \ddots & \ddots & 0\\
\beta & \hdots & \beta & 1
\end{pmatrix},\qquad\beta\in[0,2]\label{toeplitz}.
\end{align}
%

\begin{theorem}\label{improve1} Let $A$ be an $n\times n$ matrix with $\norm{A}{\infty}\leq1$ and minimal polynomial
$m=\prod_{i=1}^{\abs{m}}(z-\lambda_i)$ with $\sigma(A)\subset\mathbb{D}$. Then for any $\zeta\in\mathbb{C}-\sigma(A)$ it holds that
$$
\Norm{(\zeta-A)^{-1}}{\infty}\leq\Norm{(\zeta-M_B)^{-1}}{\infty}
$$ and
$$\left((\zeta-M_B)^{-1}\right)_{ij}=\begin{cases}\ \ 0\ &if \ i<j\\
\frac{1}{\zeta-\lambda_i}\ &if\ i=j\\
\frac{(1-\abs{\lambda_i}^2)^{1/2}}{\zeta-\lambda_i}\frac{(1-\abs{\lambda_j}^2)^{1/2}}{\zeta-\lambda_j}\prod_{\mu=j+1}^{i-1}\left(\frac{1-\bar{\lambda}_\mu\zeta}{\zeta-\lambda_\mu}\right)\ &if \ i>j
\end{cases}$$
with respect to the Malmquist-Walsh basis. (The empty product is defined to be $1$.)
\end{theorem}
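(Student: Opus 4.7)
The theorem splits into the resolvent-norm inequality and the explicit matrix formula for $(\zeta-M_B)^{-1}$; my plan is to handle them in that order.

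For the inequality I would apply Lemma~\ref{wiener} with $\cA=H_\infty$ to $f(z)=1/(\zeta-z)$, which is holomorphic on a neighbourhood of $\sigma(A)\subset\mathbb{D}$ so that $f(A)=(\zeta-A)^{-1}$ is defined via the Dunford--Taylor integral. Hilbert-space contractions admit the $H_\infty$-calculus by von Neumann, and although $f$ need not lie in $H_\infty$ when $\zeta\in\mathbb{D}$, the quotient $\Norm{f}{H_\infty/BH_\infty}$ depends only on the Nevanlinna--Pick values $f(\lambda_i)=1/(\zeta-\lambda_i)$. Any $H_\infty$ interpolant $h$ of these values satisfies $h(A)=f(A)=(\zeta-A)^{-1}$ because $m(A)=0$, so Lemma~\ref{wiener} in its interpolation form gives $\Norm{(\zeta-A)^{-1}}{\infty}\le\Norm{f}{H_\infty/BH_\infty}$. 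Lemma~\ref{sarasonnorm} then identifies the right-hand side with $\Norm{f(M_B)}{\infty}=\Norm{(\zeta-M_B)^{-1}}{\infty}$ (using $\sigma(M_B)=\sigma(A)$).

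For the matrix formula my strategy is to compute each entry via the compressed-shift identity $f(M_B)g=P_B(fg)$, valid for $g\in K_B$ and $f\in H_\infty$. Since each $e_i\in K_B$, for $|\zeta|>1$ this yields
\[
\bigl[(\zeta-M_B)^{-1}\bigr]_{ij}=\bigl\langle f(M_B)e_j,e_i\bigr\rangle=\int_{\partial\mathbb{D}}\frac{e_j(z)\,\overline{e_i(z)}}{\zeta-z}\,\frac{\abs{\d{z}}}{2\pi},
\]
and since both sides are rational in $\zeta$ the identity extends to all $\zeta\in\mathbb{C}-\sigma(A)$ by analytic continuation. On $\partial\mathbb{D}$ the unimodular identity $\overline{b_k(z)}=1/b_k(z)$ telescopes the Malmquist--Walsh Blaschke products: the factors in $\prod_{l<j}b_l$ and $\prod_{l<i}(1/b_l)$ cancel pairwise for $l<\min(i,j)$, and after absorbing the residual $(1-\bar{\lambda}_j z)$ I expect to arrive at
\[
e_j(z)\,\overline{e_i(z)}=\sqrt{(1-\abs{\lambda_i}^2)(1-\abs{\lambda_j}^2)}\;\frac{z}{(z-\lambda_i)(z-\lambda_j)}\prod_{\mu=j+1}^{i-1}\frac{1-\bar{\lambda}_\mu z}{z-\lambda_\mu}
\]
for $i>j$. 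The mirror identity for $i<j$ is manifestly analytic on $\bar{\mathbb{D}}$ and vanishes at $z=0$, so its integral against $1/(\zeta-z)$ is zero, producing the zeros above the diagonal.

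To finish (for $i\ge j$), I would convert the integral via $\abs{\d{z}}/(2\pi)=\d{z}/(2\pi iz)$ into a sum of residues of a rational function with only simple poles at $\lambda_j,\dots,\lambda_i$ inside $\mathbb{D}$ and at $\zeta$ outside. A direct degree count in $z$ forces the residue at infinity to vanish, so the inside residues sum to minus the residue at $\zeta$, which is read off as $-[(\zeta-\lambda_i)(\zeta-\lambda_j)]^{-1}\prod_{\mu=j+1}^{i-1}(1-\bar{\lambda}_\mu\zeta)/(\zeta-\lambda_\mu)$, recovering the announced closed form; the diagonal case $i=j$ reduces to the single residue at $\lambda_j$, giving $1/(\zeta-\lambda_j)$. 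The main obstacle I foresee is executing the Blaschke telescoping cleanly---an index or sign slip destroys the characteristic factor $(1-\bar{\lambda}_\mu\zeta)/(\zeta-\lambda_\mu)$ that gives the formula its structure. A robust fallback, should the residue calculation prove delicate, is to verify $(\zeta-M_B)R=\id$ directly by induction on $i-j$, powered by the algebraic identity $(1-\abs{\lambda_\mu}^2)-\bar{\lambda}_\mu(\zeta-\lambda_\mu)=1-\bar{\lambda}_\mu\zeta$, though this route obscures the function-theoretic origin of the result.
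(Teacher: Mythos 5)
Your proposal is correct and follows essentially the same route as the paper: the norm inequality comes from the $H_\infty$ functional calculus (von Neumann) combined with Sarason's theorem, where your inline argument about interpolants is exactly what the paper packages as Lemma~\ref{ratmod} (using the explicit polynomial $\varphi(z)=\frac{1}{\zeta-z}\,\frac{m(\zeta)-m(z)}{m(\zeta)}$ in place of your analytic continuation from $\abs{\zeta}>1$), and the matrix entries come from the boundary integral of $e_j\overline{e_i}/(\zeta-z)$, whose telescoped form you compute correctly. The one place you genuinely diverge is the evaluation of that integral for $i>j$: the paper sums the residues at the interior poles $\lambda_j,\dots,\lambda_i$ and then needs a separate combinatorial identity (Lemma~\ref{combi1}, proved there by comparing two $L^2(\partial\mathbb{D})$ scalar products or by polynomial interpolation) to convert that sum into the product $\frac{1}{(\zeta-\lambda_i)(\zeta-\lambda_j)}\prod_{\mu=j+1}^{i-1}\frac{1-\bar{\lambda}_\mu\zeta}{\zeta-\lambda_\mu}$, whereas your residue-at-infinity degree count delivers the product form directly as minus the residue at $\zeta$ and lets you bypass that lemma entirely --- a slightly cleaner path to the closed form. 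The only point to tighten is in the first part: Lemma~\ref{sarasonnorm} is stated for $f\in H_\infty$, so it should formally be applied to a fixed $H_\infty$ interpolant (e.g.\ the polynomial $\varphi$ above, with $\varphi(M_B)=(\zeta-M_B)^{-1}$ since $m(M_B)=0$) rather than to $1/(\zeta-\cdot)$ itself when $\zeta\in\overline{\mathbb{D}}$; this is precisely the bookkeeping that Lemma~\ref{ratmod} carries out.
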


To compare our new result Theorem~\ref{improve1} to Theorem~\ref{SD} we note that for any $n\times n$ matrices $A=(a_{ij})$ and $B=(b_{ij})$, the condition $\abs{a_{ij}}\leq b_{ij}\ \forall i,j$ implies that $\Norm{A}{\infty}\leq\Norm{B}{\infty}$. Suppose for instance that $\abs{\zeta}\leq1$. Then we can estimate the off-diagonal components of $(\zeta-M_B)^{-1}$ by
\begin{align*}
&\Abs{\frac{(1-\abs{\lambda_i}^2)^{1/2}}{1-\bar{\lambda}_i\zeta}\frac{(1-\abs{\lambda_j}^2)^{1/2}}{1-\bar{\lambda}_j\zeta}\prod_{\mu=j}^{i}
\left(\frac{1-\bar{\lambda}_\mu\zeta}{\zeta-\lambda_\mu}\right)}
\leq\max_i\frac{1-\abs{\lambda_i}^2}{\abs{1-\bar{\lambda}_i\zeta}^2}\prod_{\mu=1}^{\abs{m}}
\Abs{\frac{1-\bar{\lambda}_\mu\zeta}{\zeta-\lambda_\mu}}\\
&\leq\max_{i}\frac{1}{\abs{1-\bar{\lambda}_i\zeta}}\max_{i}
\frac{1-\abs{\lambda_i}^2}{\abs{1-\bar{\lambda}_i\zeta}}
\prod_{\mu=1}^{\abs{m}}
\Abs{\frac{1-\bar{\lambda}_\mu\zeta}{\zeta-\lambda_\mu}}
\leq\max_{i}\frac{2}{\abs{1-\bar{\lambda}_i\zeta}}\prod_{\mu=1}^{\abs{m}}\Abs{\frac{1-\bar{\lambda}_\mu\zeta}{\zeta-\lambda_\mu}},
\end{align*}
which yields the component-wise estimate
\begin{align*}
\Abs{\left((\zeta-M_B)^{-1}\right)_{ij}}\leq\frac{1}{\min_{\lambda_k\in\sigma(A)}
\abs{1-\bar{\lambda}_k\zeta}}\prod_{\mu=1}^{\abs{m}}\Abs{\frac{1-\bar{\lambda}_\mu\zeta}{\zeta-\lambda_\mu}}\cdot\begin{cases} 0\ &if \ i<j\\
1\ &if\ i=j\\
2\ &if \ i>j
\end{cases}.
\end{align*}
\begin{corollary}\label{corollary1}
Under the assumptions of Theorem~\ref{improve1} suppose that $\abs{\zeta}\leq1$. It follows
\begin{align*}
\Norm{(\zeta-A)^{-1}}{\infty}\leq\frac{\Norm{M_{\abs{m}}}{\infty}}{\min_{\lambda_k\in\sigma(A)}
\abs{1-\bar{\lambda}_k\zeta}}\frac{1}{\Abs{B(\zeta)}},
\end{align*}
where $B(\zeta)=\prod_{i=1}^{\abs{m}}\frac{\zeta-\lambda_i}{1-\bar{\lambda}_i\zeta}$ is the Blaschke product associated with $m$.
\end{corollary}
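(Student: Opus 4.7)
The plan is to combine Theorem~\ref{improve1} with an entry-wise estimate of $(\zeta-M_B)^{-1}$ in the Malmquist-Walsh basis and then to recognise the dominating matrix as $M_{\abs{m}}$. Most of the work is already contained in Theorem~\ref{improve1}; the computation sketched in the paragraph preceding the corollary supplies the main ingredients, which I would assemble as follows.

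First I would reduce to bounding $\Norm{(\zeta-M_B)^{-1}}{\infty}$ via Theorem~\ref{improve1}, and rewrite each off-diagonal entry $((\zeta-M_B)^{-1})_{ij}$ (for $i>j$) by multiplying and dividing by $(1-\bar\lambda_i\zeta)(1-\bar\lambda_j\zeta)$. This factors it into two Poisson-type pieces $(1-\abs{\lambda_i}^2)^{1/2}/(1-\bar\lambda_i\zeta)$ and $(1-\abs{\lambda_j}^2)^{1/2}/(1-\bar\lambda_j\zeta)$ times the Blaschke-like product $\prod_{\mu=j}^{i}(1-\bar\lambda_\mu\zeta)/(\zeta-\lambda_\mu)$. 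Under $\abs{\zeta}\leq 1$ one has $\abs{1-\bar\lambda_i\zeta}\geq 1-\abs{\lambda_i}\abs{\zeta}\geq 1-\abs{\lambda_i}$, which yields the key numerical bound
\[
\frac{1-\abs{\lambda_i}^2}{\abs{1-\bar\lambda_i\zeta}}\leq 1+\abs{\lambda_i}\leq 2.
\]
Applying $\sqrt{ab}\leq\max(a,b)$ to the two square-root factors and pulling one factor of $1/\min_k\abs{1-\bar\lambda_k\zeta}$ out of the maximum, together with the Schwarz-Pick observation that each term $\abs{1-\bar\lambda_\mu\zeta}/\abs{\zeta-\lambda_\mu}\geq 1$ (so that the product over $[j,i]$ is dominated by the product over $[1,\abs{m}]$), gives the entry-wise bound
\[
\Abs{((\zeta-M_B)^{-1})_{ij}}\leq \frac{1}{\min_k\abs{1-\bar\lambda_k\zeta}\cdot\abs{B(\zeta)}}\cdot\begin{cases}0,& i<j,\\ 1,& i=j,\\ 2,& i>j,\end{cases}
\]
where the identity $\abs{B(\zeta)}=\prod_\mu\abs{\zeta-\lambda_\mu}/\abs{1-\bar\lambda_\mu\zeta}$ has been used to identify the remaining product with $1/\abs{B(\zeta)}$.

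Finally, the dominating matrix on the right is precisely $M_{\abs{m}}$ scaled by the common positive constant $1/(\min_k\abs{1-\bar\lambda_k\zeta}\cdot\abs{B(\zeta)})$, so the elementary monotonicity principle recalled just before the corollary --- that entry-wise domination by a nonnegative matrix implies domination in spectral norm --- closes the argument. There is essentially no obstacle here; the only substantive step is the Poisson-factor bound by $2$, which is exactly where the hypothesis $\abs{\zeta}\leq 1$ is needed and which is precisely the numerical constant that matches the off-diagonal entries of $M_{\abs{m}}$.
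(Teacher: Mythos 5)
Your proposal is correct and follows essentially the same route as the paper: Theorem~\ref{improve1}, the rewriting of the off-diagonal entries by inserting $(1-\bar\lambda_i\zeta)(1-\bar\lambda_j\zeta)$, the bound $\frac{1-\abs{\lambda_i}^2}{\abs{1-\bar\lambda_i\zeta}}\leq 2$ for $\abs{\zeta}\leq 1$, the extension of the partial Blaschke-type product to the full one using that each factor has modulus at least $1$, and the entry-wise domination principle to compare with $M_{\abs{m}}$. This matches the computation the paper carries out in the paragraph preceding the corollary.
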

We can pass to the general case $\sigma(A)\subset\overline{\mathbb{D}}$ by continuous extension.
Setting $\zeta=1$ Corollary~\ref{corollary1} is the first assertion of Theorem~\ref{SD} with the bonus that on the right hand side the norm of an $\abs{m}\times\abs{m}$ matrix occurs (compare \cite{BS} Section 6 B). 
However, if $\max_{i}\frac{1-\abs{\lambda_i}^2}{\abs{1-\lambda_i}}=\beta$ is given we have (with the same computation as above) for $\zeta=1$
\begin{align*}
\Abs{\left((\id-M_B)^{-1}\right)_{ij}}\leq\frac{1}{\min_{\lambda_k\in\sigma(A)}\abs{1-\lambda_k}}\cdot\begin{cases} 0\ &if \ i<j\\
1\ &if\ i=j\\
\beta\ &if \ i>j
\end{cases}
\end{align*}
and we can improve the bound in Theorem~\ref{SD} if we can compute $\Norm{M_n(\beta)}{\infty}$ (see~\eqref{toeplitz}).
The following theorem generalizes the discussion of Toeplitz matrices in \cite{BS}. It establishes an indirect possibility to compute $\Norm{M_n(\beta)}{\infty}$.
\begin{proposition}\label{Top} Let $M_n(\beta)$ with $\beta\in(0,2]$ be the $n\times n$ Toeplitz matrix introduced in~\eqref{toeplitz}. Then the equation
\begin{align}
\beta\cot{(n\theta)}+(2-\beta)\cot{(\theta/2)}=0,\qquad\theta\in\mathbb{R}\label{cotan}
\end{align}
has a unique solution $\theta^*\in[\frac{2n-1}{2n}\pi,\pi)$ and 
\begin{align*}
\Norm{M_n(\beta)}{\infty}=\frac{1}{2}\sqrt{(\beta-2)^2+\frac{\beta^2}{\cot^2{(\theta^*/2)}}}.
\end{align*}
In particular it holds that
$\Norm{M_n(0)}{\infty}=1$ and $\Norm{M_n(1)}{\infty}=\frac{1}{2\sin(\frac{\pi}{4n+2})}$ and $\Norm{M_n(2)}{\infty}=\cot{(\frac{\pi}{4n})}$.
\end{proposition}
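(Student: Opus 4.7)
The plan is to compute $\Norm{M_n(\beta)}{\infty}^2$ as the largest eigenvalue of $M^*M$ (with $M := M_n(\beta)$) by reducing the eigenvector equation to a constant-coefficient second-order linear recurrence and then extracting a transcendental equation from the boundary data. First I would write out the entries of $M^*M$: a direct calculation using the lower triangular structure of $M$ yields $(M^*M)_{jj} = 1 + (n-j)\beta^2$ and $(M^*M)_{jk} = \beta + (n - \max(j,k))\beta^2$ for $j \neq k$, so the off-diagonal entries depend only on $\max(j,k)$.

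The crucial step is to take successive differences of the eigenvalue equation. With the partial sums $S_k := \sum_{j<k} x_j$, computing $(M^*Mx)_{k+1} - (M^*Mx)_k$ and substituting $(M^*Mx)_k = \mu x_k$ produces, after introducing $A := \mu - (1-\beta)$, the first-order relation $A(x_{k+1} - x_k) = -\beta^2 S_{k+1}$ valid for $1 \leq k \leq n-1$. Taking one further difference eliminates the sums and delivers the clean recurrence
\begin{align*}
A x_{k+2} + (\beta^2 - 2A)\, x_{k+1} + A x_k = 0, \qquad 1 \leq k \leq n-2.
\end{align*}
The first-difference relation at $k = 1$ and the $k = n$ row of the eigenvalue equation (which reads $(\mu-1)x_n = \beta S_n$) furnish the two missing pieces of data; reinterpreted as an extension of the recurrence to $k = 0$ and $k = n+1$, they take the particularly transparent form $x_0 = x_1$ and $x_{n+1} = (1-\beta)\, x_n$.

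The characteristic roots of the recurrence are $e^{\pm i\phi}$ with $4 \sin^2(\phi/2) = \beta^2/A$, so the general solution can be taken as $x_k = \cos\bigl((2k-1)\phi/2\bigr)$, which satisfies $x_0 = x_1$ automatically. Imposing the upper boundary condition and applying the angle-addition formulas yields $\beta \cos(n\phi) \cos(\phi/2) = (2-\beta) \sin(n\phi)\sin(\phi/2)$. The substitution $\theta = \pi - \phi$, together with $\cos(n\pi - n\theta) = (-1)^n \cos(n\theta)$, $\sin(n\pi - n\theta) = -(-1)^n \sin(n\theta)$, $\cos((\pi-\theta)/2) = \sin(\theta/2)$, and $\sin((\pi-\theta)/2) = \cos(\theta/2)$, converts this into $\beta \cos(n\theta)\sin(\theta/2) + (2-\beta)\sin(n\theta)\cos(\theta/2) = 0$, i.e.\ the stated equation $\beta \cot(n\theta) + (2-\beta) \cot(\theta/2) = 0$. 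The corresponding eigenvalue is $\mu = A + 1 - \beta = \beta^2/\bigl(4\cos^2(\theta/2)\bigr) + 1 - \beta$, and the identity $\sec^2 = 1 + \tan^2$ simplifies this to $\mu = \bigl[(2-\beta)^2 + \beta^2 \tan^2(\theta/2)\bigr]/4$.

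To finish I would establish uniqueness of $\theta^*$ in $[(2n-1)\pi/(2n), \pi)$ and identify it with the largest eigenvalue. For $\theta$ in this interval $n\theta$ traverses $[(2n-1)\pi/2, n\pi)$, on which $\cot(n\theta)$ is a strictly decreasing continuous bijection onto $(-\infty, 0]$, while $(2-\beta)\cot(\theta/2)$ is bounded and nonnegative; hence the left-hand side of the transcendental equation strictly decreases from a nonnegative value to $-\infty$ and admits a unique zero $\theta^*$ by the intermediate value theorem. The same monotonicity argument applied to each interval $(k\pi/n, (k+1)\pi/n)$ produces exactly $n$ roots in $(0, \pi)$, corresponding to the $n$ eigenvalues of $M^*M$; since $\mu(\theta) = \beta^2/\bigl(4\cos^2(\theta/2)\bigr) + 1 - \beta$ is strictly increasing on $(0, \pi)$, the largest eigenvalue sits at the largest root, namely $\theta^*$. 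Taking square roots yields the claimed formula, and the special values at $\beta \in \{0,1,2\}$ follow by direct substitution into the transcendental equation. The main obstacle I anticipate is the bookkeeping across the two successive differences and, above all, the reparametrization $\phi \mapsto \pi - \theta$: the \emph{natural} frequency $\phi$ produced by the recurrence lies near $0$ for the dominant eigenvalue, and one needs the reflection to land in the interval $[(2n-1)\pi/(2n), \pi)$ stated in the proposition.
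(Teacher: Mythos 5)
Your proposal is correct, and it reaches the same transcendental equation \eqref{cotan} and the same expression $\mu=\tfrac14\bigl[(2-\beta)^2+\beta^2\tan^2(\theta/2)\bigr]$ as the paper, but by a genuinely different route. The paper never touches $M^*M$: it passes to the persymmetric flip $\tilde M_n(\beta)$, which is Hermitian, and computes $\det\bigl(\tilde M_n(\beta)^2-\lambda^2\id\bigr)$ by successive column and row subtractions that reduce the problem to an almost-tridiagonal Toeplitz determinant, evaluated via the classical identity $\det(\cdot)=\sin((n+1)\theta)/\sin\theta$ with $x=2\cos\theta$; the boundary perturbation is handled by splitting the determinant linearly in its last column. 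You instead work with $M^*M$ directly, difference the eigenvalue equation twice to obtain the three-term recurrence $Ax_{k+2}+(\beta^2-2A)x_{k+1}+Ax_k=0$ with the boundary data $x_0=x_1$ and $x_{n+1}=(1-\beta)x_n$, and solve with the ansatz $x_k=\cos((2k-1)\phi/2)$; I checked the entries of $M^*M$, the differencing, the boundary reductions, and the reflection $\theta=\pi-\phi$, and they are all correct. What each approach buys: the paper's determinant manipulation is self-contained modulo the quoted formula \eqref{pascal} and never needs eigenvectors, while your recurrence produces explicit eigenvectors and, more importantly, a cleaner accounting of \emph{all} $n$ roots (one in each interval $(k\pi/n,(k+1)\pi/n)$, with $\mu(\theta)$ strictly increasing), which makes the identification of the operator norm with the root $\theta^*\in[\tfrac{2n-1}{2n}\pi,\pi)$ more transparent than the paper's rather terse closing remark on maximality. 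One point worth making explicit in a final write-up: the passage from the original $n$ eigenvalue equations to the $n-1$ differenced relations plus the $k=n$ row is an equivalence (no solutions are gained or lost), and $A=\mu-1+\beta>0$ for the relevant eigenvalues since $\det M_n(\beta)=1$ forces the top singular value to be at least $1$.
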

It is possible to expand $\cot(n\theta)$ in Equation~\eqref{cotan} in terms of $\cot(\theta/2)$, which yields a polynomial equation in $\cot(\theta/2)$. Since $\Norm{M_n(\beta)}{\infty}$ only depends on  $\cot(\theta/2)$ (and $\beta$) computing $\Norm{M_n(\beta)}{\infty}$ is equivalent to finding the unique zero of the resulting polynomial in the interval $(0,\cot\left(\frac{2n-1}{4n}\pi\right)]$ as a function of $\beta$.

Finally, statement \textit{3}~of Theorem~\ref{SD} can be recovered from Theorem~\ref{improve1} with the choice of a minimal polynomial $m=(z-a)^n$, $a\in(0,1)$ and setting $A_n(a)=M_B$. In this case we have for $1\leq i,j\leq n$ that
\begin{align*}
\Abs{\left((\id-M_B)^{-1}\right)_{ij}}=\frac{1}{1-a}\cdot\begin{cases}\ 0\ &if \ i<j\\
\ 1\ &if\ i=j\\
1+a\ &if \ i>j.
\end{cases}
\end{align*}
Letting $a\to1$ proves item~\textit{3}~of Theorem~\ref{SD}. In the following Subsection~\ref{model} we compute the entries of $M_B$ with respect to the Malmquist-Walsh basis. This yields a simple form for matrices that achieve equality in Theorem~\ref{improve1} i.e.~for $A$ with largest $\Norm{(\zeta-A)^{-1}}{\infty}$ for a given spectrum.
\begin{proposition}\label{modelcomp} The components of the model operator $M_B$ with respect to Malmquist-Walsh basis are given by
$$\left(M_B\right)_{ij}=\begin{cases}\qquad\qquad\qquad{0}\ &if \ i<j\\
\qquad\qquad\qquad\lambda_i\ &if\ i=j\\
(1-\abs{\lambda_i}^2)^{1/2}(1-\abs{\lambda_j}^2)^{1/2}\prod_{\mu=j+1}^{i-1}\left(-\bar{\lambda}_\mu\right)\ &if \ i>j.
\end{cases}$$
\end{proposition}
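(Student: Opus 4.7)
The strategy is to compute the matrix entries $(M_B)_{ij}=\langle M_B e_j, e_i\rangle$ directly, exploiting the projection structure. Since $M_B e_j=P_B(ze_j)$ and $e_i\in K_B$, one has $(M_B)_{ij}=\langle ze_j, e_i\rangle$. Writing $b_k(z)=(z-\lambda_k)/(1-\bar\lambda_k z)$ and $\phi_k:=\prod_{\mu=1}^{k}b_\mu$, the identity $z=\lambda_j+(1-\bar\lambda_j z)b_j(z)$ combined with the definition $e_j=(1-|\lambda_j|^2)^{1/2}\phi_{j-1}/(1-\bar\lambda_j z)$ yields the key decomposition
\begin{align*}
ze_j=\lambda_j e_j+(1-|\lambda_j|^2)^{1/2}\phi_j,
\end{align*}
which drives the entire calculation: the spectral diagonal part $\lambda_j e_j$ contributes only to $(M_B)_{jj}$ via orthonormality, and everything off-diagonal is controlled by $\langle\phi_j,e_i\rangle$.

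Next I would record two structural facts about the Malmquist-Walsh basis: $e_k\in\phi_{k-1}H_2$ (read off directly from the definition), and $e_k\perp\phi_k H_2$. The second is a short boundary computation using $\overline{\phi_k}=1/\phi_k$ on $\partial\mathbb{D}$: for any $h\in H_2$ the inner product $\langle e_k,\phi_k h\rangle$ reduces to $(1-|\lambda_k|^2)^{1/2}\int \overline{h(z)}/(z-\lambda_k)\,|dz|/(2\pi)$, which vanishes because the Laurent expansion of $\overline{h(z)}/(z-\lambda_k)$ on $|z|=1$ contains only strictly negative powers of $z$. Together these two facts dispose of the triangular regime at once: for $i<j$, the inclusion $\phi_{j-1}=\phi_i\cdot b_{i+1}\cdots b_{j-1}$ gives $ze_j\in\phi_{j-1}H_2\subseteq\phi_iH_2$, so $(M_B)_{ij}=0$; for $i=j$, the relation $\phi_j\in\phi_j H_2\perp e_j$ together with $\langle e_j,e_j\rangle=1$ gives $(M_B)_{jj}=\lambda_j$.

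For $i>j$ the computation reduces to evaluating $\langle\phi_j,e_i\rangle$. The clean route is to invert the definition of $e_{j+1}$, obtaining
\begin{align*}
\phi_j=\frac{1}{(1-|\lambda_{j+1}|^2)^{1/2}}\,e_{j+1}-\frac{\bar\lambda_{j+1}}{(1-|\lambda_{j+1}|^2)^{1/2}}\,ze_{j+1},
\end{align*}
and then pairing with $e_i$. Using $\langle e_{j+1},e_i\rangle=\delta_{i,j+1}$ and $\langle ze_{j+1},e_i\rangle=(M_B)_{i,j+1}$, the case $i=j+1$ (together with the already-established $(M_B)_{j+1,j+1}=\lambda_{j+1}$) supplies the base case $(M_B)_{j+1,j}=(1-|\lambda_j|^2)^{1/2}(1-|\lambda_{j+1}|^2)^{1/2}$, while for $i>j+1$ the $\delta$-term vanishes and one obtains the recursion
\begin{align*}
(M_B)_{ij}=-\bar\lambda_{j+1}\,\frac{(1-|\lambda_j|^2)^{1/2}}{(1-|\lambda_{j+1}|^2)^{1/2}}\,(M_B)_{i,j+1}.
\end{align*}
Induction on $i-j$ then collapses to the claimed closed form, the factors $(1-|\lambda_{\mu}|^2)^{1/2}$ telescoping while the factors $-\bar\lambda_{j+1},\ldots,-\bar\lambda_{i-1}$ accumulate into the stated (possibly empty) product.

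The main obstacle is the boundary orthogonality $e_k\perp\phi_k H_2$: once this is in hand, the triangular structure is automatic and the nontrivial entries follow from a single one-step recurrence rather than a residue calculation. The diagonalizability assumption enters only in guaranteeing that the Malmquist-Walsh basis in the stated form is an orthonormal basis for $K_B$; the confluent case requires the modified kernels $z^{k-1}/(1-\bar\lambda_i z)^k$, but the resulting formula extends by continuity, as the paper already remarks.
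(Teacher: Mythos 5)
Your proof is correct, but it takes a genuinely different route from the paper. The paper evaluates $(M_B)_{ij}=\braket{ze_j}{e_i}$ as a boundary integral and computes it directly with the Residue theorem (the same technique it uses for Lemma~4 and for the resolvent entries in Theorem~3), reading off the product $\prod_{\mu=j+1}^{i-1}(-\bar\lambda_\mu)$ as a residue at the origin. You instead avoid contour integration entirely: the decomposition $ze_j=\lambda_j e_j+(1-\abs{\lambda_j}^2)^{1/2}\phi_j$ together with the two structural facts $e_k\in\phi_{k-1}H_2$ and $e_k\perp\phi_kH_2$ makes the lower-triangular (Hessenberg-type) structure and the diagonal entries conceptually transparent, and the off-diagonal entries then follow from the one-step recurrence obtained by writing $\phi_j$ back in terms of $e_{j+1}$ and $ze_{j+1}$; I checked the recursion and the base case $i=j+1$, and they reproduce the stated closed form, with the orthogonality argument via negative Fourier powers on $\partial\mathbb{D}$ being sound. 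What each approach buys: yours isolates the algebraic mechanism behind the triangularity of the compressed shift and needs no residue bookkeeping, which is arguably cleaner for this particular proposition; the paper's residue computation is less structural but is the same single tool it deploys uniformly for Theorem~3 and Lemma~4, so within the paper it comes essentially for free. Your closing remark about diagonalizability and continuous extension matches the paper's own convention.
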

Hence, one explicit form of the matrices $A_n(a)$ in Theorem~\ref{SD} is
\begin{align*}
A_n(a):=\begin{pmatrix}
 a & 0 & \hdots &  \hdots &0\\
1-a^2 & a & \ddots & & \vdots\\
-a(1-a^2)& 1-a^2& a & \ddots& \vdots\\
\vdots & \ddots & \ddots &\ddots& 0\\
(-1)^na^{(n-2)}(1-a^2) & \hdots &-a(1-a^2) &1-a^2 & a
\end{pmatrix}.
\end{align*}

Finally, we note that Theorem~\ref{improve1} is stronger than Theorem~\ref{SD} in that it holds for \emph{general} $\zeta$ and yields an \emph{optimal} bound for \emph{general} spectra.\\

The rest of this section is organized in two subsections. The first, Subsection~\ref{model}, contains a proof of Theorem~\ref{improve1} and Proposition~\ref{modelcomp} while in Subsection~\ref{sec:top} we prove Proposition~\ref{Top}.

\subsection{A model operator approach to resolvent bounds}\label{model}
As mentioned before our approach is to bound a function of a matrix in terms of the norm of a representative function. A key role is played by Lemma~\ref{wiener}, which however requires that $f\in\cA$. In order to derive upper bounds for rational functions such as the resolvent we need to extend Lemma~\ref{wiener}.
The following is based on the techniques of \cite{AC}, Lemma~3.2 for the discussion of inverses. Here, we present an extension, which is adapted to our purposes.
\begin{lemma}\label{ratmod} Let $A$ be an $n\times n$ matrix with $\sigma(A)\subset\mathbb{D}$ and let $\psi$ be a rational function with poles $(\xi_i)_{i=1,...,k}$ such that $\bigcup_i\{\xi_i\}\cap\sigma(A)=\emptyset$.
\begin{enumerate}
\item If $A$ obeys an $\cA$-functional calculus with constant $C$ then $\Norm{\psi(A)}{}\leq C\inf\{\Norm{g}{\cA}\:|\:g\in\cA,\ g(\lambda_i)=\psi(\lambda_i)\ i=1,...,n\}$.
\item If $\norm{A}{\infty}\leq1$ holds then $\norm{\psi(A)}{\infty}\leq\norm{\psi(M_B)}{\infty}$.
\end{enumerate}
\end{lemma}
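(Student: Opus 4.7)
The plan is to extend Lemma~\ref{wiener} from $\cA$-elements to rational functions whose poles avoid $\sigma(A)$. The central observation, exploited in both parts, is that whenever $g\in\cA$ interpolates $\psi$ at $\sigma(A)$ in the Hermite sense (matching values and, at eigenvalues $\lambda_i$ of multiplicity $k_i>1$ in $m$, derivatives up to order $k_i-1$), one has $g(A)=\psi(A)$. Indeed, since $\sigma(A)\subset\mathbb{D}$, both $g\in\cA\subset Hol(\mathbb{D})$ and the rational $\psi$ are analytic on a common neighborhood of $\sigma(A)$. Expressing $g(A)$ and $\psi(A)$ via the Dunford-Taylor integral along a contour enclosing $\sigma(A)$ reduces each to the same Hermite interpolating polynomial of degree $<\abs{m}$ evaluated on $A$, using $m(A)=0$. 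Put differently, $f(A)$ depends only on the germs of $f$ at the eigenvalues of $A$.

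For part~(1), I would fix any interpolant $g\in\cA$ and apply the hypothesized $\cA$-functional calculus,
\begin{align*}
\Norm{\psi(A)}{}=\Norm{g(A)}{}\leq C\Norm{g}{\cA},
\end{align*}
and take the infimum over all admissible $g$ to obtain the claimed bound.

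For part~(2), I would specialize part~(1) to $\cA=H_\infty$ with $C=1$ (von Neumann's inequality), yielding
\begin{align*}
\Norm{\psi(A)}{\infty}\leq\inf\{\Norm{g}{H_\infty}\:|\:g\in H_\infty,\ g(\lambda_i)=\psi(\lambda_i)\ \forall i\}.
\end{align*}
Since $M_B$ is also annihilated by $m$ (write $m=B\cdot\prod_i(1-\bar{\lambda}_iz)$, so $m\in BH_\infty$, and $f(M_B)=0$ for $f\in BH_\infty$ by Sarason), the same Hermite argument gives $g(M_B)=\psi(M_B)$ for every such interpolant. Fixing one $g_0\in H_\infty$, Lemma~\ref{sarasonnorm} delivers
\begin{align*}
\Norm{g_0(M_B)}{\infty}=\Norm{g_0}{H_\infty/BH_\infty}=\inf\{\Norm{g_0+Bh}{H_\infty}\:|\:h\in H_\infty\}.
\end{align*}
As $h$ varies over $H_\infty$, the functions $g_0+Bh$ sweep out exactly the $H_\infty$-interpolants of $\psi$ at $\sigma(A)$: each such function interpolates because $B$ vanishes on $\sigma(A)$, and conversely any interpolant differs from $g_0$ by an element of $BH_\infty$ (a standard Hardy-space division fact). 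The two infima therefore coincide, and combining gives $\Norm{\psi(A)}{\infty}\leq\Norm{\psi(M_B)}{\infty}$.

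The main obstacle is making the identification $g(A)=\psi(A)$ rigorous, particularly in the presence of eigenvalue multiplicities where Hermite data are required. The manuscript's convention of restricting to diagonalizable $A$ and passing to the general case by continuity sidesteps this subtlety, since the interpolation conditions then reduce to pointwise matching $g(\lambda_i)=\psi(\lambda_i)$ at distinct eigenvalues.
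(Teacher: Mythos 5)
Your argument is correct and reaches the same conclusion, but the mechanism at the key step differs from the paper's. The paper does not invoke the general principle that $f(A)$ is determined by the Hermite data of $f$ on $\sigma(A)$; instead it constructs the explicit polynomial $\varphi:=\psi\cdot\prod_{j}\left(\frac{m(\xi_j)-m}{m(\xi_j)}\right)^{k_j}$, whose correction factor is a polynomial equal to $1$ on $\sigma(A)$ (to the right order) and cancels the poles of $\psi$, so that $\varphi(A)=\psi(A)$ manifestly; it then applies Lemma~\ref{wiener} and Lemma~\ref{sarasonnorm} directly to $\varphi$, with $\varphi(M_B)=\psi(M_B)$ following from $m(M_B)=0$. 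That construction buys two things: the functional calculus is only ever applied to a polynomial, so no care is needed about whether $g(A)$ for a general $g\in\cA$ agrees with the Dunford--Taylor definition, and part (2) drops out of Lemma~\ref{sarasonnorm} in one line, without your extra step of identifying the coset $g_0+BH_\infty$ with the set of all $H_\infty$-interpolants of $\psi$. Your route, by contrast, makes transparent why only the interpolation data matter and avoids the ad hoc correction factor; the ingredients you rely on are standard and correct --- an interpolant minus $g_0$ is divisible by the finite Blaschke product $B$ within $H_\infty$, and $f(M_B)=0$ for $f\in BH_\infty$ --- and your handling of multiplicities via Hermite data addresses a point the paper only treats in the remark following its proof.
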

\begin{proof} We extend Lemma~\ref{wiener} to the situation, when $\psi$ is rational. Define $\varphi:=\psi\cdot\prod_{j=1}^{k}\left(\frac{m(\xi_j)-m}{m(\xi_j)}\right)^{k_j}$, where $k_j$ denotes the multiplicity of the pole at $\xi_j$ and note that $\varphi$ is polynomial and that $\psi(A)=\varphi(A)$. It follows using Lemma~\ref{wiener} that
\begin{align*}
\Norm{\psi(A)}{}&=\Norm{\varphi(A)}{}\leq C\Norm{\varphi}{\cA/m\cA}=C\inf\{\Norm{g}{\cA}\:|\:g\in\cA,\ g(\lambda_i)=\varphi(\lambda_i)\ i=1,...,n\}\\
&=C\inf\{\Norm{g}{\cA}\:|\:g\in \cA,\ g(\lambda_i)=\psi(\lambda_i)\ i=1,...,n\},
\end{align*}
which proves the first assertion. For the second one we consider the same $\varphi$ as above and note that
\begin{align*}
\Norm{\psi(A)}{\infty}&=\Norm{\varphi(A)}{\infty}\leq\Norm{\varphi}{H_\infty/BH_\infty}=\Norm{\varphi(M_B)}{\infty},
\end{align*}
where we applied Lemma~\ref{sarasonnorm} in the last step. But as $m(M_B)=0$ it follows that $\varphi(M_B)=\psi(M_B)$.
\end{proof}
Let us remark that Lemma~\ref{ratmod} remains valid if the eigenvalue $\lambda_i$ carries degeneracy $k_i$ in $m$. The point here is to replace the $\inf$ on the right hand side of \emph{1} with $\inf\{\Norm{g}{\cA}\:|\:g\in\cA,\ g^{(k)}(\lambda_i)=\psi^{(k)}(\lambda_i)\ , 0\leq k<k_i\}$, where the superscript $k$ denotes the $k$-th derivative.

\begin{lemma}\label{combi1} Let $\{\lambda_i\}_{i=1,...,n}\subset\mathbb{D}$ and let $\zeta\in\mathbb{C}-\{\lambda_i\}_{i=1,...,n}$ and $j<i$ then
$$\sum_{\mu=j}^{i}\frac{1}{\zeta-\lambda_\mu}\frac{\prod_{\nu:\nu\neq i,\nu\neq j}(1-\bar{\lambda}_\nu\lambda_\mu)}{\prod_{\nu:\nu\neq\mu}(\lambda_\mu-\lambda_\nu)}=\frac{1}{(1-\bar{\lambda}_i\zeta)(1-\bar{\lambda}_j\zeta)}\:\prod_{\mu=j}^{i}\left(\frac{1-\bar{\lambda}_\mu\zeta}{\zeta-\lambda_\mu}\right).$$
\end{lemma}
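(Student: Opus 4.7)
My plan is to view both sides as rational functions of $\zeta$ and compare partial fraction decompositions at their common (simple) poles. The key observation is that the right-hand side simplifies considerably: the factors corresponding to $\mu=j$ and $\mu=i$ inside the product exactly cancel the two prefactors in the denominator, so that
\begin{equation*}
F(\zeta) \;:=\; \frac{1}{(1-\bar{\lambda}_i\zeta)(1-\bar{\lambda}_j\zeta)}\prod_{\mu=j}^{i}\frac{1-\bar{\lambda}_\mu\zeta}{\zeta-\lambda_\mu}
\;=\;\frac{1}{(\zeta-\lambda_j)(\zeta-\lambda_i)}\prod_{\mu=j+1}^{i-1}\frac{1-\bar{\lambda}_\mu\zeta}{\zeta-\lambda_\mu}.
\end{equation*}
Assuming the $\lambda_\mu$ are distinct (the general case follows by continuity, as elsewhere in the section), $F$ is a rational function whose only singularities in $\mathbb{C}$ are simple poles at the points $\lambda_j,\lambda_{j+1},\ldots,\lambda_i$; since the denominator has degree $i-j+1$ and the numerator degree $i-j-1$, $F$ decays like $\zeta^{-2}$ at infinity, hence is uniquely determined by its principal parts.

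Next I would compute these residues. For the ``boundary'' indices $\mu=j$ (and by the same calculation $\mu=i$),
\begin{equation*}
\operatorname{Res}_{\zeta=\lambda_j}F \;=\; \frac{1}{\lambda_j-\lambda_i}\prod_{\mu'=j+1}^{i-1}\frac{1-\bar{\lambda}_{\mu'}\lambda_j}{\lambda_j-\lambda_{\mu'}},
\end{equation*}
while for an interior index $j<\mu<i$ the factor $\frac{1-\bar{\lambda}_\mu\zeta}{\zeta-\lambda_\mu}$ contributes residue $1-|\lambda_\mu|^2$, giving
\begin{equation*}
\operatorname{Res}_{\zeta=\lambda_\mu}F \;=\; \frac{1-|\lambda_\mu|^2}{(\lambda_\mu-\lambda_j)(\lambda_\mu-\lambda_i)}\prod_{\mu'=j+1,\,\mu'\neq\mu}^{i-1}\frac{1-\bar{\lambda}_{\mu'}\lambda_\mu}{\lambda_\mu-\lambda_{\mu'}}.
\end{equation*}

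Finally, I would match these residues term-by-term with the coefficients of $1/(\zeta-\lambda_\mu)$ appearing on the left-hand side, interpreting the index $\nu$ there as ranging over $\{j,j+1,\ldots,i\}$. For $\mu=j$ the constraints $\nu\neq i,\,\nu\neq j$ leave exactly $\nu\in\{j+1,\ldots,i-1\}$ in the numerator, and $\nu\neq j$ reproduces the factor $\lambda_j-\lambda_i$ together with the same denominator product as above; this identifies the $\mu=j$ coefficient with $\operatorname{Res}_{\zeta=\lambda_j}F$, and symmetrically for $\mu=i$. For an interior $\mu$, the numerator product on the left-hand side includes the factor $\nu=\mu$, which produces precisely the $1-|\lambda_\mu|^2$ that appears in the residue, and the remaining factors match directly. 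Since both sides are proper rational functions sharing the same simple poles and principal parts (and both vanishing at infinity), they are equal.

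The only real obstacle is bookkeeping: one must be careful that the factor $1-|\lambda_\mu|^2$ arises in both the residue and the left-hand-side coefficient exactly for the interior indices and not for $\mu\in\{j,i\}$, and that the index ranges in the two products on the left-hand side (exclusion of $\{i,j\}$ in the numerator versus exclusion of $\{\mu\}$ in the denominator) are correctly expanded when cross-checking against the residue formula.
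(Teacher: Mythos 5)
Your proof is correct, and it is in substance the paper's own second (elementary) proof: matching the residue of the right-hand side at each pole $\lambda_\mu$ against the corresponding coefficient on the left is exactly the paper's verification of the cleared-denominator polynomial identity at the nodes $\{\lambda_\alpha\}_{j\le\alpha\le i}$, with your observation that both sides vanish at infinity playing the role of the paper's degree count. (The paper also gives a first proof via $L_2(\partial\mathbb{D})$ inner products of $t(z)=\frac{z}{\zeta-z}\frac{1}{(1-\bar{\lambda}_iz)(1-\bar{\lambda}_jz)}$ with the truncated Blaschke product $B_{ji}$, which is a genuinely different route from yours.)
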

%
%
%
%
%
%
%
%
%
\begin{proof}[Proof of Lemma~\ref{combi1}] We present two proofs for this lemma. The first one arises naturally in the context of $H_2$ spaces (see the proof of Theorem~\ref{improve1}), while the second one is more direct and simple.  We define $t(z):=\frac{z}{\zeta-z}\frac{1}{(1-\bar{\lambda}_iz)(1-\bar{\lambda}_jz)}$ and the (truncated) Blaschke product $B_{ji}(z):=\prod_{\mu=j}^{i}\frac{z-\lambda_\mu}{1-\bar{\lambda}_\mu z}$ and compute the $L_2(\partial\mathbb{D})$ scalar product. Suppose for now that $\abs{\zeta}>1$ then
\begin{align*}
&\braket{t}{B_{ji}}=\int_0^{2\pi}t(z)\overline{B_{ji}(z)}\Big|_{z=e^{i\phi}}\frac{\d\phi}{2\pi}=\int_0^{2\pi}t(z)\prod_\mu\frac{1-\bar{\lambda}_{\mu}z}{z-\lambda_\mu}\Big|_{z=e^{i\phi}}\frac{\d\phi}{2\pi}\\
&=\frac{1}{2\pi i}\int_{\partial\mathbb{D}}\frac{1}{(\zeta-z)(1-\bar{\lambda}_iz)(1-\bar{\lambda}_jz)}\prod_\mu\frac{1-\bar{\lambda}_{\mu}z}{z-\lambda_\mu}\:\d z
=\sum_{\mu=j}^{i}\frac{1}{\zeta-\lambda_\mu}\frac{\prod_{\nu:\nu\neq i,\nu\neq j}(1-\bar{\lambda}_\nu\lambda_\mu)}{\prod_{\nu:\nu\neq\mu}(\lambda_\mu-\lambda_\nu)},
\end{align*}
where in the last step we applied the Residue theorem and made use of the assumption $\abs{\zeta}>1$. On the other hand
\begin{align*}
\braket{B_{ji}}{t}&=\int_0^{2\pi}B_{ji}(z)\overline{t(z)}\Big|_{z=e^{i\phi}}\frac{\d\phi}{2\pi}=\frac{1}{2\pi i}\int_{\partial \mathbb{D}}\prod_\mu\frac{z-\lambda_\mu}{1-\bar{\lambda}_{\mu}z}\frac{1}{\bar{\zeta}z-1}\frac{z}{(z-\lambda_i)(z-\lambda_j)}\d z\\
&=\frac{1}{(1-\lambda_i\bar{\zeta})(1-\lambda_j\bar{\zeta})}\:\prod_\mu\frac{1-\lambda_{\mu}\bar{\zeta}}{\bar{\zeta}-\bar{\lambda}_\mu}.
\end{align*}
Clearly, $\braket{t}{B_{ji}}=\overline{\braket{B_{ji}}{t}}$ from which the lemma follows for
$\abs{\zeta}>1$. In case that 
$\abs{\zeta}<1$ we compute similarly
\begin{align*}
\braket{t}{B_{ji}}&=\frac{1}{2\pi i}\int_{\partial\mathbb{D}}\frac{1}{(\zeta-z)(1-\bar{\lambda}_iz)(1-\bar{\lambda}_jz)}\prod_\mu\frac{1-\bar{\lambda}_{\mu}z}{z-\lambda_\mu}\:\d z\\
&=\sum_{\mu=j}^{i}\frac{1}{\zeta-\lambda_\mu}\frac{\prod_{\nu:\nu\neq i,\nu\neq j}(1-\bar{\lambda}_\nu\lambda_\mu)}{\prod_{\nu:\nu\neq\mu}(\lambda_\mu-\lambda_\nu)}-\frac{1}{(1-\bar{\lambda}_i\zeta)(1-\bar{\lambda}_j\zeta)}\:\prod_\mu\frac{1-\bar{\lambda}_{\mu}\zeta}{\zeta-\lambda_\mu}
\end{align*}
and
\begin{align*}
\braket{B_{ji}}{t}=0.
\end{align*}
The case $\abs{\zeta}=1$ follows by continuity. For the second proof we multiply both sides of the lemma with $\prod_{\mu=j}^i(\zeta-\lambda_\mu)$ to obtain a polynomial equation in $\zeta$
\begin{align*}
\sum_{\mu=j}^{i}\prod_{\nu:\nu\neq\mu}(\zeta-\lambda_\nu)\frac{\prod_{\nu:\nu\neq i,\nu\neq j}(1-\bar{\lambda}_\nu\lambda_\mu)}{\prod_{\nu:\nu\neq\mu}(\lambda_\mu-\lambda_\nu)}=\prod_{\mu:\mu\neq i,\mu\neq j}(1-\bar{\lambda}_\mu\zeta).
\end{align*}
The polynomial on the left hand side has degree at most $i-j$ and the degree of the polynomial on the right hand side is $i-j-1$. Two polynomials of a certain degree $n$ are the same if and only if they coincide at $n+1$ nodes.
We choose the $i-j+1$ values $\{\lambda_\alpha\}_{j\leq\alpha\leq i}$ and verify that for this choice equality indeed holds:
\begin{align*}
\sum_{\mu=j}^{i}\prod_{\nu:\nu\neq\mu}(\zeta-\lambda_\nu)\frac{\prod_{\nu:\nu\neq i,\nu\neq j}(1-\bar{\lambda}_\nu\lambda_\mu)}{\prod_{\nu:\nu\neq\mu}(\lambda_\mu-\lambda_\nu)}\Bigg|_{\zeta=\lambda_\alpha}&=\prod_{\nu:\nu\neq\alpha}(\lambda_\alpha-\lambda_\nu)\frac{\prod_{\nu:\nu\neq i,\nu\neq j}(1-\bar{\lambda}_\nu\lambda_\alpha)}{\prod_{\nu:\nu\neq\alpha}(\lambda_\alpha-\lambda_\nu)}\\
&=\prod_{\nu:\nu\neq i,\nu\neq j}(1-\bar{\lambda}_\nu\lambda_\alpha).
\end{align*}
\end{proof}
We are now ready to present a proof of Theorem~\ref{improve1}.
\begin{proof}[Proof of Theorem~\ref{improve1}]
The first assertion follows directly from Lemma~\ref{ratmod}. To compute the matrix entries of $(\zeta-M_B)^{-1}$ with respect to Malmquist-Walsh basis we recall that 
\begin{align*}
(\zeta-M_B)^{-1}=\varphi(M_B),
\end{align*}
where $\varphi(z):=\frac{1}{\zeta-z}\:\frac{m(\zeta)-m(z)}{m(\zeta)}$ is a polynomial. We have that
\begin{align}
&((\zeta-M_B)^{-1})_{ij}=\braket{\varphi(M_B)e_j}{e_i}=\braket{P_B(\varphi\: e_j)}{e_i}=\braket{\varphi\: e_j}{e_i}=\int_0^{2\pi}\varphi(z)e_j(z)\overline{e_i(z)}\Big|_{z=e^{i\phi}}\frac{\d \phi}{2\pi}\nonumber\\
&=\frac{((1-\abs{\lambda_i}^2)(1-\abs{\lambda_j}^2))^{1/2}}{2\pi i}\int_{\partial\mathbb{D}}\varphi(z)\frac{1}{(1-\bar{\lambda}_iz)(1-\bar{\lambda}_jz)}\prod_{\mu=1}^{j-1}\frac{z-\lambda_\mu}{1-\bar{\lambda}_\mu z}\prod_{\nu=1}^{i}\frac{1-\bar{\lambda}_\nu z}{z-\lambda_\nu}\:\d z.\label{cases}
\end{align}
In case that $j>i$ the integrand is holomorphic on $\mathbb{D}$. Hence, the integral in \eqref{cases} is zero. If $j=i$
we have that
\begin{align*}
\frac{(1-\abs{\lambda_i}^2)}{2\pi i}\int_{\partial\mathbb{D}}\frac{1}{\zeta-z}\:\frac{m(\zeta)-m(z)}{m(\zeta)}\frac{1}{(1-\bar{\lambda}_iz)(z-\lambda_i)}\:\d z=\frac{1}{\zeta-\lambda_i}.
\end{align*}
Finally if $j<i$ then \eqref{cases} becomes
\begin{align*}
&\frac{((1-\abs{\lambda_i}^2)(1-\abs{\lambda_j}^2))^{1/2}}{2\pi i}\int_{\partial\mathbb{D}}\frac{1}{\zeta-z}\:\frac{m(\zeta)-m(z)}{m(\zeta)}\frac{1}{(1-\bar{\lambda}_iz)(1-\bar{\lambda}_jz)}\prod_{\nu=j}^{i}\frac{1-\bar{\lambda}_\nu z}{z-\lambda_\nu}\:\d z\\
&=((1-\abs{\lambda_i}^2)(1-\abs{\lambda_j}^2))^{1/2}\sum_{\mu=j}^{i}\frac{1}{\zeta-\lambda_\mu}\frac{\prod_{\nu:\nu\neq i,\nu\neq j}(1-\bar{\lambda}_\nu\lambda_\mu)}{\prod_{\nu:\nu\neq\mu}(\lambda_\mu-\lambda_\nu)}.
\end{align*}
An application of Lemma~\ref{combi1} concludes the proof of Theorem~\ref{improve1}.
\end{proof}
Proposition~\ref{modelcomp} is verified via a direct calculation.
\begin{proof}[Proof of Proposition~\ref{modelcomp}] We proceed as in the derivation of Theorem~\ref{improve1} and conclude
\begin{align*}
(M_B)_{ij}=((1-\abs{\lambda_i}^2)(1-\abs{\lambda_j}^2))^{1/2}\int_0^{2\pi}\frac{z^2}{(1-\bar{\lambda}_iz)(1-\bar{\lambda}_jz)}\prod_{\mu=1}^{j-1}\frac{z-\lambda_\mu}{1-\bar{\lambda}_\mu z}\prod_{\nu=1}^{i}\frac{1-\bar{\lambda}_\nu z}{z-\lambda_\nu}\:\Bigg|_{z=e^{i\phi}}\frac{\d \phi}{2\pi}.
\end{align*}
If $j>i$ the Residue theorem reveals that the integral is zero. Similarly, if $i=j$ the integral is given by $\lambda_i$. Finally if $i>j$ we compute
\begin{align*}
&\int_0^{2\pi}\frac{z^2}{(1-\bar{\lambda}_iz)(1-\bar{\lambda}_jz)}\prod_{\mu=j}^{i}\frac{1-\bar{\lambda}_\mu z}{z-\lambda_\mu}\:\Bigg|_{z=e^{i\phi}}\frac{\d \phi}{2\pi}=\overline{\int_0^{2\pi}\frac{1}{(z-\lambda_i)(z-\lambda_j)}\prod_{\mu=j}^{i}\frac{z-\lambda_\mu}{1-\bar{\lambda}_\mu z}\:\Bigg|_{z=e^{i\phi}}\frac{\d \phi}{2\pi}}\\
&=\overline{\frac{1}{2\pi i}\int_{\partial\mathbb{D}}\frac{1}{z(z-\lambda_i)(z-\lambda_j)}\prod_{\mu=j}^{i}\frac{z-\lambda_\mu}{1-\bar{\lambda}_\mu z}\d z}=\overline{\prod_{\mu=j+1}^{i-1}(-\lambda_\mu)},
\end{align*}
where the last step again uses the Residue theorem.
\end{proof}

\subsection{Computing the norm of certain Toeplitz matrices}\label{sec:top}
In this subsection we prove Proposition~\ref{Top} with a direct computation of $\Norm{M_n(\beta)}{\infty}$. Our approach is guided by the techniques developed in \cite{eger}. The quantities $\Norm{M_n(1)}{\infty}$ and $\Norm{M_n(2)}{\infty}$ are computed in \cite{BS} and \cite{LOTS} (Lemma 9.6.5) following a different approach.
\begin{proof}[Proof of Proposition~\ref{Top}]
Instead of working with 
\begin{align*}
M_n(\beta)=\begin{pmatrix}
 1 & 0 & \hdots &  0\\
\beta & 1 & \ddots & \vdots\\
\vdots & \ddots & \ddots & 0\\
\beta & \hdots & \beta & 1
\end{pmatrix}
\end{align*}
directly, we consider the matrix
\begin{align*}
\tilde{M}_n(\beta):=\begin{pmatrix}
\beta  &\hdots& \beta &  1\\
\vdots& \reflectbox{$\ddots$} & 1 & 0\\
\beta & \reflectbox{$\ddots$} & \reflectbox{$\ddots$} & \vdots\\
1& 0 & \hdots & 0
\end{pmatrix}
\end{align*}
and note that 
\begin{align*}
\Norm{M_n(\beta)}{\infty}=\Norm{\tilde{M}_n(\beta)}{\infty}.
\end{align*}
As $\tilde{M}_n(\beta)$ is Hermitian all its eigenvalues are real and its $\infty$-norm is simply the largest in magnitude eigenvalue. The eigenvalues of $\tilde{M}_n(\beta)^2$ are the eigenvalues of $\tilde{M}_n(\beta)$ squared. Hence, we are looking for the largest $\lambda^2$ such that
\begin{align*}
0=\det{(\tilde{M}_n(\beta)^2-\lambda^2\id)}=\det{(\tilde{M}_n(\beta)-\lambda\id)(\tilde{M}_n(\beta)+\lambda\id)}.
\end{align*}
Direct computation reveals that
\begin{align*}
&(\tilde{M}_n(\beta)-\lambda\id)(\tilde{M}_n(\beta)+\lambda\id)=\textnormal{\small{$\begin{pmatrix}
\beta-\lambda  &\beta &\beta &\hdots& \beta &  1\\
\beta  &\beta-\lambda & & & 1 &  0\\
\beta&   &  & & & 0\\
\vdots& &   & & & \vdots\\
\beta & 1&   & & -\lambda& 0\\
1& 0 &0&\hdots & 0&-\lambda
\end{pmatrix}\cdot\begin{pmatrix}
\beta+\lambda  &\beta &\beta &\hdots& \beta &  1\\
\beta  &\beta+\lambda & & & 1 &  0\\
\beta&   &  & & & 0\\
\vdots& &   & & & \vdots\\
\beta & 1&   & & \lambda& 0\\
1& 0 &0&\hdots & 0&\lambda
\end{pmatrix}$}}\\
&=\begin{pmatrix}
(n-1)\beta^2-\lambda^2+1  &(n-2)\beta^2+\beta &(n-3)\beta^2+\beta &\hdots& \beta^2+\beta &  \beta\\
(n-2)\beta^2+\beta  &(n-2)\beta^2-\lambda^2+1& (n-3)\beta^2+\beta&\hdots & \beta^2+\beta & \beta\\
(n-3)\beta^2+\beta&   (n-3)\beta^2+\beta&  & & & \beta\\
\vdots&\vdots &   & & & \vdots\\
\beta^2+\beta & \beta^2+\beta&   & &\beta^2-\lambda^2+1& \beta\\
\beta& \beta &\beta&\hdots &\beta&-\lambda^2+1
\end{pmatrix}.
\end{align*}
We rearrange the resulting determinant by subtracting successively the second column from the first, the third from the second, the $n$-th from the $n-1$-th and leave the $n$-th unchanged. This yields
\begin{align*}
&\det{(\tilde{M}_n(\beta)^2-\lambda^2\id)}=\\
&\det\begin{pmatrix}
\beta^2-\beta-\lambda^2+1  &\beta^2 &\beta^2 &\hdots& \beta^2 &  \beta\\
\beta+\lambda^2-1&\beta^2-\beta-\lambda^2+1& \beta^2&\hdots & \beta^2& \beta\\
0&\beta+\lambda^2-1&  & & & \beta\\
\vdots&\vdots &   & & & \vdots\\
0 & 0&   & &\beta^2-\beta-\lambda^2+1& \beta\\
0& 0&0&\hdots &\beta+\lambda^2-1&-\lambda^2+1
\end{pmatrix}.
\end{align*}
Similarly, we subtract the second row from the first, the third from the second, the $n$-th from the $n-1$-th and leave the $n$-th unchanged. We conclude
\begin{align}
&\det{(\tilde{M}_n(\beta)^2-\lambda^2\id)}=\nonumber\\
&\textnormal{\small{$\det\begin{pmatrix}
\beta^2-2\beta-2\lambda^2+2  &\beta+\lambda^2-1 &0 &\hdots& 0 &  0\\
\beta+\lambda^2-1&\beta^2-2\beta-2\lambda^2+2& \beta+\lambda^2-1&0 &\hdots& 0\\
0&\beta+\lambda^2-1&  & & & 0\\
\vdots&0 &   & & & \vdots\\
0 & \vdots&   & &\beta^2-2\beta-2\lambda^2+2&\beta+\lambda^2-1\\
0& 0&0&\hdots &\beta+\lambda^2-1&-\lambda^2+1
\end{pmatrix}$}}=\nonumber\\
&\textnormal{\small{$\det\begin{pmatrix}
\beta^2-2\beta-2\lambda^2+2  &\beta+\lambda^2-1 &0 &\hdots& 0 &  0\\
\beta+\lambda^2-1&\beta^2-2\beta-2\lambda^2+2& & &\hdots& 0\\
0&\beta+\lambda^2-1&  & & & 0\\
\vdots&0 &   & & & \vdots\\
0 & \vdots&   & &\beta^2-2\beta-2\lambda^2+2&\beta+\lambda^2-1\\
0& 0&0&\hdots &\beta+\lambda^2-1&\beta^2-2\beta-2\lambda^2+2
\end{pmatrix}$}}\nonumber\\
&+\textnormal{\small{$\det\begin{pmatrix}
\beta^2-2\beta-2\lambda^2+2  &\beta+\lambda^2-1 &0 &\hdots& 0 &  0\\
\beta+\lambda^2-1&\beta^2-2\beta-2\lambda^2+2& & &\hdots& 0\\
0&\beta+\lambda^2-1&  & & & 0\\
\vdots&0 &   & & & \vdots\\
0 & \vdots&   & &\beta^2-2\beta-2\lambda^2+2&0\\
0& 0&0&\hdots &\beta+\lambda^2-1&\lambda^2-(\beta-1)^2
\end{pmatrix}$}},\label{wutte}
\end{align}
where the last equality is a consequence of the linearity of $\textnormal{det}$ in the last column.
The following is a classical formula for the determinant of an $n\times n$ tri-diagonal Toeplitz matrix~\cite{eger,pascal}
\begin{align}
\det{\begin{pmatrix}
x  &1&0 &\hdots&  0\\
1&x&1&\ddots &\vdots\\
0&1& \ddots&\ddots& 0\\
\vdots &\ddots &\ddots &x&1\\
0& \hdots&0 &1&x
\end{pmatrix}}=\frac{\sin(n+1)\theta}{\sin\theta},\qquad x=2\cos\theta.\label{pascal}
\end{align}
To apply this result we exclude the trivial case $\beta=0$ and note that we can always assume that $\lambda^2\geq1$ such that $\beta+\lambda^2-1>0$ and $\frac{\beta^2}{\beta+\lambda^2-1}\in(0,\beta]$.
Hence, we can divide all columns of both determinants of \eqref{wutte} by $\beta+\lambda^2-1$. We then expand the second determinant along its last column and apply~\eqref{pascal} to both terms resulting from~\eqref{wutte}. We find
\begin{align}
\det{(\tilde{M}_n(\beta)^2-\lambda^2\id)}={(\beta+\lambda^2-1)^n}\left(\frac{\sin(n+1)\theta}{\sin\theta}+\frac{\lambda^2-(\beta-1)^2}{\lambda^2+(\beta-1)}\:\frac{\sin n\theta}{\sin\theta}\right)\label{zerowant}
\end{align}
with
\begin{align*}
2\cos\theta=\frac{\beta^2-2\beta-2\lambda^2+2}{\lambda^2+\beta-1}=\frac{\beta^2}{\lambda^2+\beta-1}-2.
\end{align*}
Solving the latter for $\lambda^2$ gives
\begin{align*}
\lambda^2=\frac{1}{4}\left((\beta-2)^2+\beta^2\tan^2(\theta/2)\right),
\end{align*}
where $\beta\neq0$ implies that $\theta$ is such that the tangent is well defined. This enables us to eliminate $\lambda^2$ from \eqref{zerowant} as
\begin{align*}
\frac{\lambda^2-(\beta-1)^2}{\lambda^2+(\beta-1)}=\frac{1}{\beta}(-\beta+2-2\beta\cos\theta+2\cos\theta).
\end{align*}
It follows that~\eqref{zerowant} is zero if and only if
\begin{align*}
0&=\beta\:\frac{\sin (n+1)\theta}{\sin\theta}+(-\beta+2-2\beta\cos\theta+2\cos\theta)\:\frac{\sin n\theta}{\sin\theta}\\
&=\beta\:\cos n\theta+(2-\beta)\:(1+\cos\theta)\:\frac{\sin n\theta}{\sin\theta},
\end{align*}
which in turn is equivalent to
\begin{align}
\cot n\theta=\frac{\beta-2}{\beta}\cot(\theta/2)\label{cot}.
\end{align}

In total, we are looking for the solution $\theta^*$ of \eqref{cot} such that $\lambda^2$ is maximal i.e. $\cot^2(\theta^*/2)$ is minimal. Since for any $\theta\in[\frac{2n-1}{2n}\pi,\pi)$ we have $\frac{\beta-2}{\beta}\cot(\theta/2)\leq0$ with $\cot(\pi/2)=0$
and $\lim_{\theta\uparrow\pi}\cot{n\theta}\to-\infty$ and $\cot{\frac{2n-1}{2}\pi}=0$, it follows that there is a unique solution $\theta^*\in[\frac{2n-1}{2n}\pi,\pi)$ of Equation~\eqref{cot}. Moreover, by the same fact, $\cot(\pi/2)=0$, this solution maximizes $\lambda^2$ as desired.

Sometimes it is possible to obtain a solution for Equation~\eqref{cot} in closed form. Suppose $\beta=2$,
then $\cot n\theta^*=0$ and $\theta^*=\frac{2n-1}{2n}\pi$. It follows
\begin{align*}
\Norm{M_n(2)}{\infty}^2=\tan^2\left(\frac{2n-1}{4n}\pi\right)=\cot^2(\pi/4n)
\end{align*}
as in \cite{BS}. If $\beta=1$ we have
\begin{align*}
\lambda^2=\frac{1}{4\cos^2(\theta/2)}
\end{align*}
and
\begin{align*}
\sin(2n+1)\theta/2=0
\end{align*}
such that $\theta^*=\frac{2n\pi}{2n+1}$. It follows
\begin{align*}
\Norm{M_n(1)}{\infty}^2=\frac{1}{4\cos^2(\frac{n\pi}{2n+1})}=\frac{1}{4\sin^2(\frac{\pi}{4n+2})}
\end{align*}
as in \cite{eger}. The trivial fact $\Norm{M_n(0)}{\infty}=1$ can be recovered by continuous extension as $\beta\to0$.
\end{proof}

\section{Power-bounded operators}\label{powerbounds}

It is natural to ask if power-boundedness of $A$ is sufficient to obtain estimates on $\Norm{(\zeta-A)^{-1}}{}$ qualitatively similar to the results of Theorem~\ref{SD}, \ref{improve1} and Corollary~\ref{corollary1}. In this section we prove that this is indeed the case and present a new bound on the norm of the resolvent of a power-bounded operator.
\begin{theorem}\label{improve2}
Let $A$ be an $n\times n$ matrix with minimal polynomial $m$ of degree ${\abs{m}}$ and let $\Norm{\cdot}{}$ be an arbitrary matrix norm with
$\sup_{k\geq0}\norm{A^k}{}=C<\infty$. For any $\zeta\in \overline{\mathbb{D}}-\sigma(A)$ it holds that
\begin{align*}
&\Norm{(\zeta-A)^{-1}}{}\\&\leq\frac{2{\abs{m}}C}{\min_{\lambda_i\in\sigma(A)}\abs{1-\bar{\zeta}\lambda_i}^{1/2}(2{\abs{m}}-2{\abs{m}}\abs{\zeta}^2+\abs{\zeta}^2\min_{\lambda_i\in\sigma(A)}\abs{1-\bar{\zeta}\lambda_i})^{1/2}}\left(\frac{4e}{\abs{B(\zeta)}^2}-1\right)^{1/2},
\end{align*}
where $B(\zeta)=\prod_{i=1}^{\abs{m}}\frac{\zeta-\lambda_i}{1-\bar{\lambda}_i\zeta}$ is the Blaschke product associated with $m$.
For $\abs{\zeta}>1$, we have the obvious estimate $\Norm{(\zeta-A)^{-1}}{}\leq\frac{C}{\abs{\zeta}-1}$.
\end{theorem}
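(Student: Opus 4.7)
The plan is to apply the Wiener-algebra functional calculus afforded by power-boundedness, together with an explicit interpolant built from the Blaschke product~$B$. Because $\sup_{k\geq 0}\Norm{A^k}{}\leq C$, the discussion in Section~\ref{specnr} shows that $\Norm{f(A)}{}\leq C\Norm{f}{W}$ for every $f\in W$. Applying Lemma~\ref{ratmod} to the rational function $\psi(z)=(\zeta-z)^{-1}$, whose only pole $\zeta$ lies in $\bar{\mathbb{D}}\setminus\sigma(A)$, reduces the problem to a Nevanlinna--Pick-type interpolation bound:
\begin{equation*}
\Norm{(\zeta-A)^{-1}}{}\leq C\cdot\inf\bigl\{\Norm{g}{W}\:|\:g\in W,\ g(\lambda_i)=(\zeta-\lambda_i)^{-1}\ \forall\ \lambda_i\in\sigma(A)\bigr\}.
\end{equation*}
The remaining task is therefore to produce a convenient $g$ and to control its Wiener norm.

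A natural candidate is
\begin{equation*}
g(z):=\frac{1}{\zeta-z}\left(1-\frac{B(z)}{B(\zeta)}\right)=\frac{B(\zeta)-B(z)}{B(\zeta)(\zeta-z)},
\end{equation*}
which satisfies $g(\lambda_i)=(\zeta-\lambda_i)^{-1}$ because $B(\lambda_i)=0$. The apparent pole at $z=\zeta$ cancels since the numerator vanishes there; the only other singularities of $B$ lie at $1/\bar{\lambda}_i$, outside $\bar{\mathbb{D}}$. Hence $g$ extends holomorphically across the unit circle and lies in $W$. Repeated eigenvalues are accommodated because the derivatives of $B$ of order less than the multiplicity vanish at $\lambda_i$; the boundary case $|\lambda_i|=1$ is handled by the continuity argument used elsewhere in the paper.

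To estimate $\Norm{g}{W}=\sum_{k\geq 0}|\hat g(k)|$, I apply Cauchy--Schwarz with positive weights $(w_k)$,
\begin{equation*}
\Norm{g}{W}\leq\Bigl(\sum_{k\geq 0}w_k^{-1}\Bigr)^{1/2}\Bigl(\sum_{k\geq 0}w_k\,|\hat g(k)|^2\Bigr)^{1/2},
\end{equation*}
choosing weights of the form $w_k=\alpha+\beta k$ so that the right-hand sum becomes a linear combination of $\Norm{g}{H_2}^2$ and $\sum_k k|\hat g(k)|^2$, both of which are explicitly computable by residues. Exploiting $|B(e^{i\phi})|=1$ on $\partial\mathbb{D}$ and using the residue theorem at $z=\zeta$ yields the key identity
\begin{equation*}
\Norm{g}{H_2}^2=\frac{1}{1-|\zeta|^2}\left(\frac{1}{|B(\zeta)|^2}-1\right),
\end{equation*}
while an analogous but lengthier contour integration at the poles $1/\bar{\lambda}_\mu$ of $B(z)$ handles the first-Sobolev-type sum, bringing in both the degree factor $|m|$ and the proximity factor $\min_i|1-\bar{\zeta}\lambda_i|$.

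The main obstacle is the combinatorial optimization of the weights: one must tune $\alpha,\beta$ (as functions of $|m|$ and $|\zeta|$) so that after minimization the specific combination $d\bigl(2|m|(1-|\zeta|^2)+|\zeta|^2 d\bigr)$, with $d:=\min_i|1-\bar{\zeta}\lambda_i|$, emerges under the square root in the denominator. The constant $4e$ in the factor $\sqrt{4e/|B(\zeta)|^2-1}$ is expected to enter through the elementary inequality $(1+1/|m|)^{|m|}\leq e$ applied to a geometric-type tail sum arising from the analyticity of $g$ in a disk slightly larger than $\bar{\mathbb{D}}$. The complementary bound $\Norm{(\zeta-A)^{-1}}{}\leq C/(|\zeta|-1)$ for $|\zeta|>1$ follows at once from the Neumann series $(\zeta-A)^{-1}=\sum_{k\geq 0}\zeta^{-k-1}A^k$ and power-boundedness.
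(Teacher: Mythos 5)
Your reduction to the Wiener-algebra interpolation problem via Lemma~\ref{ratmod} is exactly the paper's first step, and your candidate $g(z)=\frac{B(\zeta)-B(z)}{B(\zeta)(\zeta-z)}$ is (for distinct eigenvalues) the same element of $K_B$ that the paper writes in Lagrange form. The gap is in the step that is supposed to produce a finite bound on $\Norm{g}{W}$. With weights $w_k=\alpha+\beta k$ the first Cauchy--Schwarz factor is $\sum_{k\geq0}(\alpha+\beta k)^{-1}$, a harmonic-type series that diverges for every choice of $\alpha,\beta>0$, so the proposed inequality reads $\Norm{g}{W}\leq\infty$ and no tuning of the weights can rescue it. To make $\sum_k w_k^{-1}$ converge you need super-linear weights, and the natural choice is geometric, $w_k=r^{-2k}$; but then $\sum_k r^{-2k}\abs{\hat g(k)}^2=\Norm{g_{1/r}}{H_2}^2$ requires control of $g$ on a circle of radius $1/r>1$, where $B$ has poles at $1/\bar\lambda_i$ that may sit arbitrarily close to $\partial\mathbb{D}$ independently of $\abs{m}$ and $\zeta$ --- so ``analyticity in a slightly larger disk'' is not quantitatively usable either.

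The paper's resolution is the idea your argument is missing: do not smooth $g$ itself, but interpolate at the \emph{stretched} nodes $r\lambda_i$ using the stretched Blaschke product $\tilde B(z)=\prod_i\frac{z-r\lambda_i}{1-r\bar\lambda_iz}$, i.e.\ build $\tilde g$ with $\tilde g(r\lambda_i)=\frac{1}{\zeta-\lambda_i}$ and take $\tilde g_r(z)=\tilde g(rz)$ as the Wiener-class interpolant. Then $\Norm{\tilde g_r}{W}\leq(1-r^2)^{-1/2}\Norm{\tilde g}{H_2}$ with $\tilde g$ still analytic on $\mathbb{D}$, and $\Norm{\tilde g}{H_2}^2$ is computed exactly by residues (Lemma~\ref{combi2}), giving $\frac{r^2}{1-r^2\abs{\zeta}^2}\bigl(\prod_i\Abs{\frac{1-r^2\bar\lambda_i\zeta}{r\zeta-r\lambda_i}}^2-1\bigr)$; note your formula for $\Norm{g}{H_2}^2$ is the $r=1$ specialization of this. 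The choice $1-r^2=\min_i\abs{1-\bar\zeta\lambda_i}/(2\abs{m})$ then yields $(1+\tfrac{1}{2\abs{m}})^{2\abs{m}}\leq e$ and $r^{2\abs{m}}\geq1/4$, which is where the constant $4e$ and the factor $2\abs{m}-2\abs{m}\abs{\zeta}^2+\abs{\zeta}^2\min_i\abs{1-\bar\zeta\lambda_i}$ actually come from --- not from a first-Sobolev sum $\sum_k k\abs{\hat g(k)}^2$, which the paper never needs. Your closing remark for $\abs{\zeta}>1$ via the Neumann series is correct.
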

%
Theorem~\ref{improve2} is the analogue of Corollary~\ref{corollary1} for power-bounded operators. Spectral bounds on the norm of the resolvent of a power-bounded operator are well studied in the literature. Theorem 6.4 of \cite{BS} treats the same problem in the special case that $A$ is power-bounded with respect to operator norm $\Norm{\cdot}{\infty}$. In \cite[Theorem 3.24]{AC} the behavior of $\Norm{(\zeta-A)^{-1}}{}$ is studied for $\abs{\zeta}<1$ and in \cite{Rachid1} an upper bound is derived for $\abs{\zeta}\geq1$. Theorem~\ref{improve2} unifies the mentioned results and yields a quantitatively better bound in each case.
To compare suppose that $\abs{\zeta}<1$ and note that in this case
\begin{align*}
1-\abs{\zeta}^2+\frac{1}{2\abs{m}}\abs{\zeta}^2\min_{\lambda_i\in\sigma(A)}\abs{1-\bar{\zeta}\lambda_i}\geq(1-\abs{\zeta})^2
\end{align*}
and of course
$\min_{\lambda_i\in\sigma(A)}\abs{1-\bar{\zeta}\lambda_i}\geq1-\abs{\zeta}$. 
Hence, it follows
\begin{align*}
\Norm{(\zeta-A)^{-1}}{}\leq\frac{\sqrt{8e\abs{m}}C}{(1-\abs{\zeta})^{3/2}}\:\frac{1}{\abs{B(\zeta)}},
\end{align*}
which is qualitatively the same as Theorem 3.24 in \cite{AC} but has a better numerical prefactor. If we choose $\abs{\zeta}=1$ it follows $\abs{B(\zeta)}=1$ and therefore
\begin{align}
\Norm{(\zeta-A)^{-1}}{}\leq\frac{\sqrt{16e-4}\:\abs{m}C}{\min_{\lambda_i\in\sigma(A)}\abs{\zeta-\lambda_i}}.\label{rach}
\end{align}
This bound improves on the result in \cite{Rachid1} (which in turn is stronger than \cite[Theorem 6.4]{BS}) as the new bound only grows linearly with $\abs{m}$ as opposed to $\abs{m}^{3/2}$ in \cite{Rachid1}. That for power-bounded $A\in\cM_n$ the correct asymptotic growth order for an upper bound is $\cO(n)$ was already suspected in \cite{BS} and \cite{Rachid2}. The bound obtained almost reaches the optimal estimate of Theorem~\ref{SD} for Hilbert-space contractions. In the latter case we have that $\cot(\frac{\pi}{4n})/n\leq\frac{4}{\pi}$, while the prefactor of \eqref{rach} is $\sqrt{16e-4}\approx6.28$. However, as is clear from the derivation, Inequality~\eqref{rach} is not optimal. We will use Inequality~\eqref{rach} to study the sensitivity of a classical or quantum Markov chain to perturbations in Section~\ref{Chain}.

To prove Theorem~\ref{improve2} we take a similar approach as to Theorem~\ref{improve1}. We note that power-bounded operators admit a Wiener algebra functional calculus. Thus an application of Lemma~\ref{ratmod} reveals that
\begin{align}
\Norm{(\zeta-A)^{-1}}{}\leq C\inf\{\Norm{g}{W}\:|\:g\in W,\ g(\lambda_i)=\frac{1}{\zeta-\lambda_i}\}\label{here}.
\end{align}

The strategy of our proof will be to consider one specific representative function $g$ in \eqref{here} and to bound its norm. To achieve this we employ the following method. Instead of considering $g$ directly we choose a ``smoothing parameter\rq\rq{} $r$ and pass to a ``stretched\rq\rq{} interpolation function.\\
Given any function $f\in H_2$ and $r\in(0,1)$, we write $f_r(z):=f(rz)=\sum_{k\geq0}\hat{f}(k)r^kz^k$ and observe that by the Cauchy-Schwarz inequality and the Plancherel identity
\begin{align}
\Norm{f_r}{W}\leq\sqrt{\sum_{k\geq0}\abs{\hat{f}(k)}^2}\sqrt{\frac{1}{1-r^2}}=\Norm{f}{H_2}\sqrt{\frac{1}{1-r^2}}.\label{CS}
\end{align}
This idea was used to obtain bounds on the inverse and resolvent of a power-bounded operator in \cite{AC} and to study spectral convergence bounds for bounded semigroups in \cite{wir}.\\ 
We use the Blaschke products $B(z)=\prod_i\frac{z-\lambda_i}{1-\bar{\lambda}_i z}$ and $\tilde{B}(z)=\prod_i\frac{z-r\lambda_i}{1-r\bar{\lambda}_i z}$, where in the latter product the spectrum is stretched by a factor of $r$. (The products are taken over all prime factors of $m$, but to avoid cumbersome notation we do not write this explicitly.) Consider now the function $g$ with
\begin{align*}
g(z)=\sum_k \left(\frac{1}{\zeta-\lambda_k}\frac{\prod_j(1-\bar{\lambda}_j\lambda_k)}{\prod_{j\neq k}(\lambda_k-\lambda_j)}\right)\frac{B(z)}{z-\lambda_k}.
\end{align*}
Note that $g$ is analytic in the unit disc and $g(\lambda_i)=\frac{1}{\zeta-\lambda_i}$ for all $\lambda_i\in\sigma(A)$. In order to use the estimate~\eqref{CS} we perform the aforementioned smoothing. We define the modified function $\tilde{g}$ by
\begin{align*}
\tilde{g}(z)=\sum_k\left( \frac{1}{\zeta-\lambda_k}\frac{\prod_j(1-r^2\bar{\lambda}_j\lambda_k)}{\prod_{j\neq k}(r\lambda_k-r\lambda_j)}\right)\frac{\tilde{B}(z)}{z-r\lambda_k}
\end{align*}
and observe that $\tilde{g}_r$ enjoys the same basic properties as $g$ i.e.~$\tilde{g}_r$ is analytic in $\mathbb{D}$ and $\tilde{g}_r(\lambda_i)=\frac{1}{\zeta-\lambda_i}$ for any $\lambda_i\in\sigma(A)$.
Thus, by Inequality~\eqref{here}, we have that $\Norm{(\zeta-A)^{-1}}{}\leq C\Norm{\tilde{g}_r}{W}$ and it follows from Inequality~\eqref{CS} that
\begin{align*}
\Norm{\tilde{g}_r}{W}\leq\sqrt{\frac{1}{1-r^2}}\Norm{\tilde{g}}{H_2}.
\end{align*}
It turns out that one can directly compute $\Norm{\tilde{g}}{H_2}$. The computation relies on two combinatorial observations similar to Lemma~\ref{combi1}, which we shall prove before we proceed with our discussion of $\Norm{\tilde{g}}{H_2}$.
\begin{lemma}\label{combi2} Let $\abs{m}\in\mathbb{N}-\{0\}$ and $\{\lambda_i\}_{i=1,...,\abs{m}}\subset\mathbb{D}$. Furthermore, let $\zeta\in\mathbb{C}-\{\lambda_i\}_{i=1,...,\abs{m}}$ and $r\in(0,1)$. It follows that
\begin{enumerate}\item
\begin{align*}
\sum_{i=1}^{\abs{m}}\frac{1}{\zeta-\lambda_i}\frac{\prod_{j:j\neq l}(1-r^2\bar{\lambda}_j\lambda_i)}{\prod_{j:j\neq i}(r\lambda_i-r\lambda_j)}=\frac{r}{1-r^2\bar{\lambda}_l\zeta}\:\prod_{i=1}^{\abs{m}}\frac{1-r^2\bar{\lambda}_i\zeta}{r\zeta-r\lambda_i},
\end{align*}
\item
\begin{align*}
\sum_{i=1}^{\abs{m}}\frac{1}{\zeta-\lambda_i}\:\frac{1}{1-r^2\bar{\zeta}\lambda_i}\frac{\prod_j(1-r^2\bar{\lambda}_j\lambda_i)}{\prod_{j:j\neq i}(r\lambda_i-r\lambda_j)}=\frac{r}{1-r^2\abs{\zeta}^2}\left(\prod_{i=1}^{\abs{m}}\frac{1-r^2\bar{\lambda}_i\zeta}{r\zeta-r\lambda_i}-\prod_{i=1}^{\abs{m}}\frac{r\bar{\zeta}-r\bar{\lambda}_i}{1-r^2\lambda_i\bar{\zeta}}\right),
\end{align*}
\item
\begin{align*}
\Norm{\tilde{g}}{H_2}^2=\frac{r^2}{1-r^2\abs{\zeta}^2}\left(\prod_{i=1}^{\abs{m}}\Abs{\frac{1-r^2\bar{\lambda}_i\zeta}{r\zeta-r\lambda_i}}^2-1\right).
\end{align*}
\end{enumerate}
\end{lemma}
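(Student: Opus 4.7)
The plan is to handle the three parts sequentially, with parts 1 and 2 supplying the algebraic identities that feed into the Hilbert-space computation in part 3. For part 1, I would mimic the second proof of Lemma~\ref{combi1}: clearing denominators by multiplying both sides with $r^{\abs{m}-1}\prod_i(\zeta-\lambda_i)$ reduces the claim to a polynomial identity in $\zeta$ whose two sides have degree at most $\abs{m}-1$, so it suffices to verify agreement at the $\abs{m}$ nodes $\zeta=\lambda_\alpha$, where only the term $i=\alpha$ survives on the left. Equivalently, one recognises the left-hand side as the partial-fraction expansion of the proper rational function $R(z)=r^{-(\abs{m}-1)}\prod_{j\neq l}(1-r^2\bar\lambda_j z)/\prod_j(z-\lambda_j)$ evaluated at $z=\zeta$, and a direct simplification identifies $R(\zeta)$ with the right-hand side.

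For part 2, I would split the summand using the partial fraction in $\lambda_i$,
\[\frac{1}{(\zeta-\lambda_i)(1-r^2\bar\zeta\lambda_i)}=\frac{1}{1-r^2\abs{\zeta}^2}\left(\frac{1}{\zeta-\lambda_i}-\frac{r^2\bar\zeta}{1-r^2\bar\zeta\lambda_i}\right),\]
so that the left-hand side becomes $(1-r^2\abs{\zeta}^2)^{-1}(U-r^2\bar\zeta V)$, where $U$ and $V$ carry the full product $\prod_j(1-r^2\bar\lambda_j\lambda_i)$. Both $U$ and $r^2\bar\zeta V$ arise as partial-fraction evaluations of the improper rational function $\tilde R(z)=r^{-(\abs{m}-1)}\prod_j(1-r^2\bar\lambda_j z)/\prod_j(z-\lambda_j)$, namely at $z=\zeta$ and at $z=1/(r^2\bar\zeta)$ respectively. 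Because the numerator and denominator of $\tilde R$ share degree $\abs{m}$, the expansion carries an additional constant term $P=(-1)^{\abs{m}}r^{\abs{m}+1}\prod_j\bar\lambda_j$ which enters $U$ and $r^2\bar\zeta V$ identically and cancels in the difference, and the closed-form evaluations $\tilde R(\zeta)=r\prod_i(1-r^2\bar\lambda_i\zeta)/(r\zeta-r\lambda_i)$ and $\tilde R(1/(r^2\bar\zeta))=r\prod_i(r\bar\zeta-r\bar\lambda_i)/(1-r^2\lambda_i\bar\zeta)$ then assemble into the stated right-hand side.

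For part 3, I would expand $\tilde g=\sum_k a_k u_k$ with $u_k(z):=\tilde B(z)/(z-r\lambda_k)$, so that $\Norm{\tilde g}{H_2}^2=\sum_{k,l}a_k\bar a_l\braket{u_k}{u_l}$. Because $\abs{\tilde B(z)}=1$ on $\partial\mathbb{D}$, a single residue computation at the interior pole $z=r\lambda_k$ yields $\braket{u_k}{u_l}=1/(1-r^2\bar\lambda_l\lambda_k)$. Summing over $k$ first, the factor $(1-r^2\bar\lambda_l\lambda_k)$ cancels the corresponding factor in $\prod_j(1-r^2\bar\lambda_j\lambda_k)$ inside $a_k$, and the resulting inner sum is exactly the left-hand side of part 1; invoking part 1 collapses it to $\tfrac{r}{1-r^2\bar\lambda_l\zeta}\prod_i(1-r^2\bar\lambda_i\zeta)/(r\zeta-r\lambda_i)$. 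The remaining outer sum $\sum_l\bar a_l/(1-r^2\bar\lambda_l\zeta)$ is the complex conjugate of the left-hand side of part 2, and substituting part 2 combines the product with its conjugate into $\prod_i\abs{(1-r^2\bar\lambda_i\zeta)/(r\zeta-r\lambda_i)}^2$, while the other term telescopes against its own reciprocal to $-1$, yielding the claimed formula. The step I expect to be most delicate is the bookkeeping in part 2, where the constant polynomial piece of the partial fraction must be tracked and shown to cancel cleanly; in part 3 the matching of factors between the conjugate outer sum and the left-hand side of part 2 also needs care so that the lemma applies directly.
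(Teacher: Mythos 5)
Your proposal is correct, and for the first two assertions it takes a genuinely different route from the paper. The paper proves parts 1 and 2 by computing the $L_2(\partial\mathbb{D})$ pairings $\braket{t_1}{\tilde{B}}$ and $\braket{\tilde{B}}{t_1}$ (resp.\ with $t_2$) in two ways via the Residue theorem, and it explicitly remarks that while the polynomial-comparison technique from the second proof of Lemma~\ref{combi1} would work for part 1, the residue route is ``more convenient for the second assertion.'' You show that the algebraic route does extend to part 2 cleanly: the partial fraction $\frac{1}{(\zeta-\lambda_i)(1-r^2\bar{\zeta}\lambda_i)}=\frac{1}{1-r^2\abs{\zeta}^2}\bigl(\frac{1}{\zeta-\lambda_i}-\frac{r^2\bar{\zeta}}{1-r^2\bar{\zeta}\lambda_i}\bigr)$ is correct, both resulting sums are evaluations of the partial-fraction expansion of $\tilde{R}(z)=r^{-(\abs{m}-1)}\prod_j(1-r^2\bar{\lambda}_jz)/\prod_j(z-\lambda_j)$ at $z=\zeta$ and $z=1/(r^2\bar{\zeta})$, the constant term $P=(-1)^{\abs{m}}r^{\abs{m}+1}\prod_j\bar{\lambda}_j$ indeed cancels in the difference, and the two closed forms you state for $\tilde{R}(\zeta)$ and $\tilde{R}(1/(r^2\bar{\zeta}))$ check out. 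Your approach buys a purely algebraic argument with no case distinction between $r\abs{\zeta}<1$ and $r\abs{\zeta}>1$; the paper's residue computation buys a derivation that stays inside the $H_2$ framework used throughout Section~\ref{powerbounds}. Note that your decomposition in part 2 degenerates when $r\abs{\zeta}=1$, so, exactly as in the paper, that case must be recovered by continuity. Part 3 of your argument --- the kernel inner product $\braket{u_k}{u_l}=1/(1-r^2\bar{\lambda}_l\lambda_k)$, summing over $k$ to invoke part 1, then recognizing the outer sum as the conjugate of part 2 --- coincides with the paper's computation in \eqref{ass1} and \eqref{ass2}.
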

Our proof is based on the Residue theorem. (It is also possible to prove the lemma with the second technique outlined in the proof of Lemma~\ref{combi1}. However, the approach via the Residue theorem is more convenient for the second assertion.)
\begin{proof}
For the first assertion set $t_1(z):=\frac{rz}{r\zeta-z}\frac{1}{1-r\bar{\lambda}_lz}$ and suppose for now that $r\abs{\zeta}<1$. We have that
\begin{align*}
\braket{t_1}{\tilde{B}}&=\int_0^{2\pi}\frac{rz}{r\zeta-z}\frac{1}{1-r\bar{\lambda}_lz}\:\prod_i\frac{1-r\bar{\lambda}_i z}{z-r\lambda_i}\Big|_{z=e^{i\phi}}\frac{\d\phi}{2\pi}\\
&=\frac{1}{2\pi i}\int_{\partial\mathbb{D}}\frac{r}{r\zeta-z}\frac{1}{1-r\bar{\lambda}_lz}\:\prod_i\frac{1-r\bar{\lambda}_i z}{z-r\lambda_i}\d z\\
&=\sum_i\frac{1}{\zeta-\lambda_i}\frac{\prod_{j:j\neq l}(1-r^2\bar{\lambda}_j\lambda_i)}{\prod_{j:j\neq i}(r\lambda_i-r\lambda_j)}-\frac{r}{1-r^2\bar{\lambda}_l\zeta}\prod_i\frac{1-r^2\bar{\lambda}_i\zeta}{r\zeta-r\lambda_i}
\end{align*}
and that
\begin{align*}
\braket{\tilde{B}}{t_1}&=\int_0^{2\pi}\prod_i\frac{z-r\lambda_i}{1-r\bar{\lambda}_i z}\frac{r}{r\bar{\zeta}z-1}\frac{z}{z-r\lambda_l}\:\Big|_{z=e^{i\phi}}\frac{\d\phi}{2\pi}\\
&=\frac{1}{2\pi i}\int_{\partial\mathbb{D}}\prod_i\frac{z-r\lambda_i}{1-r\bar{\lambda}_i z}\frac{r}{r\bar{\zeta}z-1}\frac{1}{z-r\lambda_l}\:\d z=0.
\end{align*}
Hence, for $r\abs{\zeta}<1$
\begin{align*}
\sum_i\frac{1}{\zeta-\lambda_i}\frac{\prod_{j:j\neq l}(1-r^2\bar{\lambda}_j\lambda_i)}{\prod_{j:j\neq i}(r\lambda_i-r\lambda_j)}=\frac{r}{1-r^2\bar{\lambda}_l\zeta}\prod_i\frac{1-r^2\bar{\lambda}_i\zeta}{r\zeta-r\lambda_i}
\end{align*}
as claimed. An identical computation reveals that the above remains correct if $r\abs{\zeta}>1$ and the case $r\abs{\zeta}=1$ follows by continuity. For the second assertion suppose again that $r\abs{\zeta}<1$ and set $t_2(z):=\frac{rz}{r\zeta-z}\frac{1}{1-r\bar{\zeta}z}$ and compute
\begin{align*}
\braket{t_2}{\tilde{B}}&=\int_0^{2\pi}\frac{rz}{r\zeta-z}\frac{1}{1-r\bar{\zeta}z}\:\prod_i\frac{1-r\bar{\lambda}_i z}{z-r\lambda_i}\Big|_{z=e^{i\phi}}\frac{\d\phi}{2\pi}\\
&=\frac{1}{2\pi i}\int_{\partial\mathbb{D}}\frac{r}{r\zeta-z}\frac{1}{1-r\bar{\zeta}z}\:\prod_i\frac{1-r\bar{\lambda}_i z}{z-r\lambda_i}\:\d z\\
&=\sum_i\frac{1}{\zeta-\lambda_i}\frac{1}{1-r^2\bar{\zeta}\lambda_i}\frac{\prod_j(1-r^2\bar{\lambda}_j\lambda_i)}{\prod_{j:j\neq i}(r\lambda_i-r\lambda_j)}-\frac{r}{1-r^2\abs{\zeta}^2}\:\prod_i\frac{1-r^2\bar{\lambda}_i\zeta}{r\zeta-r\lambda_i}.
\end{align*}
Similarly,
\begin{align*}
\braket{\tilde{B}}{t_2}=\frac{1}{2\pi i}\int_{\partial\mathbb{D}}\prod_i\frac{z-r\lambda_i}{1-r\bar{\lambda}_i z}\frac{r}{r\bar{\zeta}z-1}\frac{1}{z-r\zeta}\:\d z=\frac{r}{r^2\abs{\zeta}^2-1}\prod_i\frac{r\zeta-r\lambda_i}{1-r^2\bar{\lambda}_i \zeta}.
\end{align*}
It follows that
\begin{align*}
\sum_i\frac{1}{\zeta-\lambda_i}\frac{1}{1-r^2\bar{\zeta}\lambda_i}\frac{\prod_j(1-r^2\bar{\lambda}_j\lambda_i)}{\prod_{j:j\neq i}(r\lambda_i-r\lambda_j)}=\frac{r}{1-r^2\abs{\zeta}^2}\left(\prod_i\frac{1-r^2\bar{\lambda}_i\zeta}{r\zeta-r\lambda_i}-\prod_i\frac{r\bar{\zeta}-r\bar{\lambda}_i}{1-r^2\lambda_i\bar{\zeta}}\right).
\end{align*}
The same computations prove the validity of this statement for $r\abs{\zeta}>1$. One can make sense of the formula in case that $r\abs{\zeta}=1$ by continuous extension. Using these observations one can compute
\begin{align}
\Norm{\tilde{g}}{H_2}^2&=\int_0^{2\pi}\tilde{g}(z)\overline{\tilde{g}(z)}\Big|_{z=e^{i\phi}}\frac{\d\phi}{2\pi}\nonumber\\
&=\frac{1}{2\pi i}\sum_{k,l}\frac{1}{\zeta-\lambda_k}\frac{\prod_\mu(1-r^2\bar{\lambda}_\mu\lambda_k)}{\prod_{\mu\neq k}(r\lambda_k-r\lambda_\mu)}\overline{\frac{1}{\zeta-\lambda_l}\frac{\prod_\nu(1-r^2\bar{\lambda}_\nu\lambda_l)}{\prod_{\nu\neq l}(r\lambda_l-r\lambda_\nu)}}\int_{\partial\mathbb{D}}\frac{1}{z-r\lambda_k}\frac{1}{1-r\bar{\lambda}_lz}\d z\nonumber\\
&=\sum_l\left(\overline{\frac{1}{\zeta-\lambda_l}\frac{\prod_\nu(1-r^2\bar{\lambda}_\nu\lambda_l)}{\prod_{\nu\neq l}(r\lambda_l-r\lambda_\nu)}}\sum_k\left(\frac{1}{\zeta-\lambda_k}\frac{1}{1-r^2\bar{\lambda}_l\lambda_k}\frac{\prod_\mu(1-r^2\bar{\lambda}_\mu\lambda_k)}{\prod_{\mu\neq k}(r\lambda_k-r\lambda_\mu)}\right)\right)\nonumber\\
&=\prod_{i}\frac{1-r^2\bar{\lambda}_i\zeta}{r\zeta-r\lambda_i}\overline{\left(\sum_l\frac{1}{\zeta-\lambda_l}\frac{r}{1-r^2\lambda_l\bar{\zeta}}\frac{\prod_\nu(1-r^2\bar{\lambda}_\nu\lambda_l)}{\prod_{\nu\neq l}(r\lambda_l-r\lambda_\nu)}\right)}\label{ass1}\\
&=\frac{r^2}{1-r^2\abs{\zeta}^2}\prod_{i}\frac{1-r^2\bar{\lambda}_i\zeta}{r\zeta-r\lambda_i}\overline{\left(\prod_{i}\frac{1-r^2\bar{\lambda}_i\zeta}{r\zeta-r\lambda_i}-\prod_{i}\frac{r\bar{\zeta}-r\bar{\lambda}_i}{1-r^2\lambda_i\bar{\zeta}}\right)}\label{ass2}\\
&=\frac{r^2}{1-r^2\abs{\zeta}^2}\left(\prod_{i}\Abs{\frac{1-r^2\bar{\lambda}_i\zeta}{r\zeta-r\lambda_i}}^2-1\right)\nonumber,
\end{align}
where we used the first assertion of the lemma for \eqref{ass1} and the second assertion for \eqref{ass2}. Note that for all $\zeta\in\mathbb{C}-\sigma(A)$ and $r\in(0,1)$ the final quantity is real and positive.
\end{proof}
With this preparatory work done a proof of Theorem~\ref{improve2} is simple.
\begin{proof}[Proof of Theorem~\ref{improve2}] We assume that $\sigma(A)\subset{\mathbb{D}}$.
From Equations~\eqref{here},~\eqref{CS} and Lemma~\ref{combi2}
we have that for any $\zeta\in\mathbb{C}-\sigma(A)$
\begin{align}\label{coreth}
\Norm{(\zeta-A)^{-1}}{}\leq C\sqrt{\frac{1}{1-r^2}}\:\Norm{\tilde{g}}{H_2}= C\sqrt{\frac{1}{1-r^2}}\sqrt{\frac{r^2}{1-r^2\abs{\zeta}^2}\left(\prod_{i=1}^{\abs{m}}\Abs{\frac{1-r^2\bar{\lambda}_i\zeta}{r\zeta-r\lambda_i}}^2-1\right)}.
\end{align}
Clearly,
\begin{align*}
\prod_{i=1}^{\abs{m}}\Abs{\frac{1-r^2\bar{\lambda}_i\zeta}{r\zeta-r\lambda_i}}^2=\frac{1}{r^{2\abs{m}}}\:\frac{1}{\abs{B(\zeta)}^2}\prod_{i=1}^{\abs{m}}\Abs{1+\bar{\lambda}_i\zeta\:\frac{1-r^2}{1-\bar{\lambda}_i\zeta}}^2.
\end{align*}
To obtain an upper bound we assume that $\zeta\in\overline{\mathbb{D}}-\sigma(A)$ and choose $r\in(0,1)$ such that
\begin{align*}
1-r^2=\frac{\min_i\abs{1-\bar{\zeta}\lambda_i}}{2{\abs{m}}}.
\end{align*}
It follows that
\begin{align*}
\prod_{i=1}^{\abs{m}}\Abs{1+\bar{\lambda}_i\zeta\:\frac{1-r^2}{1-\bar{\lambda}_i\zeta}}^2\leq\left(1+\frac{1}{2{\abs{m}}}\right)^{2{\abs{m}}}\leq e
\end{align*}
and that (for ${\abs{m}}\geq2$)
\begin{align*}
r^{2{\abs{m}}}=\left(1-\frac{\min_i\abs{1-\bar{\zeta}\lambda_i}}{2\abs{m}}\right)^{\abs{m}}\geq(1-1/2)^2=1/4.
\end{align*}
We conclude that
\begin{align*}
\Norm{(\zeta-A)^{-1}}{}\leq C\left(\frac{2{\abs{m}}}{\min_i\abs{1-\bar{\zeta}\lambda_i}}\right)^{1/2}\left(\frac{1}{1-\abs{\zeta}^2\left(1-\frac{\min_i\abs{1-\bar{\zeta}\lambda_i}}{2{\abs{m}}}\right)}\right)^{1/2}\left(\frac{4e}{\abs{B(\zeta)}^2}-1\right)^{1/2},
\end{align*}
which is claimed in the theorem. As always the general case $\sigma(A)\subset\overline{\mathbb{D}}$ follows by continuous extension. Finally, we note that for $\abs{\zeta}>1$ one can choose $r=\sqrt{\frac{1}{\abs{\zeta}}}$ in \eqref{coreth} and recover the obvious estimate
\begin{align*}
\Norm{(\zeta-A)^{-1}}{}\leq C\frac{1}{\abs{\zeta}-1}\left(1-\prod_{i=1}^{\abs{m}}\Abs{\frac{\abs{\zeta}-\bar{\lambda}_i\zeta}{{\sqrt{\abs{\zeta}}}\zeta-{\sqrt{\abs{\zeta}}}\lambda_i}}^2\right)^{1/2}\leq C\frac{1}{\abs{\zeta}-1}.
\end{align*}\end{proof}
\section{Stability of Markov chains}\label{Chain}

If $T$ is a classical stochastic matrix or a quantum channel (a trace-preserving and completely positive map, see \cite{NielsenChuang}) the sequence $\{T^n\}_{n\geq0}$ can be regarded as a finite and homogenous classical or quantum Markov chain with transition map $T$. In this section we apply Theorem~\ref{improve2} to study the stability of the stationary states of a Markov chain to perturbations in the transition map. A core observation is that the transition matrix of the Markov chain is power-bounded with respect to the 1-to-1 norm and constant $1$, i.e. the Markov chain constitutes a bounded semigroup, see \cite{wir}.  A similar approach based on this observation was taken in \cite{szrewo} to prove spectral convergence estimates for classical and quantum Markov chains. We begin by recalling the basic framework of sensitivity analysis of Markov chains. A detailed introduction, however, is beyond the scope of this article. We refer to \cite{Overview} and the references therein for an overview of the existing perturbation bounds for classical Markov chains and to \cite{wir} for an introduction to the stability theory of quantum Markov chains.

Let $T,\ \tilde{T}$ denote two classical stochastic matrices or two quantum channels. The inequality
\begin{align*}
\Norm{\rho-\tilde{\rho}}{}\leq\kappa\Norm{T-\tilde{T}}{}
\end{align*}
relates the distance between two stationary states $\rho$ and $\tilde{\rho}$ arising from $T$ and $\tilde{T}$, $\rho=T(\rho),\ \tilde{\rho}=\tilde{T}(\tilde{\rho})$, to the distance between $T$ and $\tilde{T}$. Commonly $T$ is considered to be the transition matrix of the Markov chain of interest while $\tilde{T}$ is a small perturbation thereof. The \emph{condition number} $\kappa$ measures the relative sensitivity of the stationary states to perturbations. If $T$ has a unique stationary state the above inequality quantifies the stability of the asymptotic behavior of $\{T^n\}_{n\geq0}$ with respect to perturbations in the transition matrix. Elementary linear algebra shows that if $T$ has a unique stationary state one can choose (see \cite{Schweitzer,Sen1,Sen3}) the condition number
\begin{align*}
\kappa_{cl}=\sup_{\delta\in\mathbb{R}^n\atop(1,...,1)\cdot\delta=0}\frac{\Norm{Z(\delta)}{1}}{\Norm{\delta}{1}},\quad Z:=(\id-T+T^\infty)^{-1}
\end{align*}
in the classical case and similarly (see~\cite{wir})
\begin{align*}
\kappa_{qu}=\sup_{\sigma=\sigma^\dagger\in \cM_n\atop{\tr{(\sigma)}=0}}\frac{\Norm{Z(\sigma)}{1}}{\Norm{\sigma}{1}},\quad Z:=(\id-T+T^\infty)^{-1}
\end{align*}
in the quantum setup. Here, $T^\infty$ denotes the projection onto the stationary state of $T$ and $\Norm{\cdot}{1}$ denotes the absolute entry sum in the classical and the Schatten 1-norm in the quantum case. In either case the spectral properties of $T$ and $T^\infty$ guarantee that the map $Z$ exists.

If the transition matrix has a unique stationary state and a subdominant eigenvalue of this matrix is close to $1$ it is clear that the chain is ill conditioned in the sense that the stationary state is sensitive to perturbations in the transition map. It is a well-studied question (see \cite{meyerrr,Sen3,Mit1,Mit2,wir}) whether the reverse conclusion also holds, i.e.~whether the closeness of the sub-dominant eigenvalues of $T$ to $1$ provides complete information about the sensitivity of $\{T^n\}_{n\geq0}$. It was established that this is indeed the case by deriving spectral lower and upper bounds for certain choices of $\kappa$. In particular, as shown in \cite{Sen3} it holds that
\begin{align*}
\frac{1}{\min_{\lambda_i\in\sigma(T-T^\infty)}\abs{1-\lambda_i}}\leq\kappa_{cl}\leq\frac{n}{\min_{\lambda_i\in\sigma(T-T^\infty)}\abs{1-\lambda_i}}.
\end{align*}
A similar quantum bound occurs in \cite{wir}.

The techniques developed in this article yield a direct approach to spectral stability estimates in both the classical and quantum case. The core observation is that if $T$ is a stochastic matrix or a quantum channel the map $T-T^\infty$ is power bounded with (see \cite{szrewo} Lemma III.1)
\begin{align*}		
\Norm{(T-T^\infty)^n}{1\to1}=\Norm{T^n-(T^\infty)^n}{1\to1}\leq\Norm{T^n}{1\to1}+\Norm{(T^\infty)^n}{1\to1}\leq2,
\end{align*}
where $\Norm{A}{1\to1}=\sup_{v}\frac{\Norm{Av}{1}}{\Norm{v}{1}}$.
With an application of Inequality~\eqref{rach} we conclude that
\begin{align*}
\kappa_{cl}\leq\Norm{Z}{1\to1}\leq\frac{2\sqrt{16e-4}n}{\min_{\lambda_i\in\sigma(T-T^\infty)}\abs{1-\lambda_i}},
\end{align*}
which is qualitatively the same as the estimate in \cite{Sen3}  but has a worse numerical prefactor ($2\sqrt{16e-4}$ instead of $1$). However, the bound in \cite{Sen3} uses the additional properties a classical stochastic matrix has as well as the fact that the supremum in the definition of $\kappa_{cl}$ is taken over vectors with $0$ column sum.
Our bound proves that in this case power-boundedness alone is sufficient and the additional assumptions on $T$ and $\kappa_{cl}$ are basically superfluous. Other spectral stability estimates for classical Markov chains such as \cite{meyerrr} are weaker than \eqref{rach}. In the quantum context we can use Inequality~\eqref{rach} to improve on the spectral stability estimates of \cite{wir}.
\begin{theorem}
Let $T$ be a trace-preserving, positive linear map on $\cM_n$ and {$\Lambda:=\sigma(T)-\{1\}$} the set of its non-unit eigenvalues. Then 
$$\frac{1}{\min_{\lambda_i\in\Lambda}\abs{1-\lambda_i}}\leq\kappa_{qu}\leq\frac{2\sqrt{16e-4}n^2}{\min_{\lambda_i\in\Lambda}\abs{1-\lambda_i}}.$$
\end{theorem}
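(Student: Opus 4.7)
My plan is to interpret $\kappa_{qu}$ as the norm of a resolvent and then invoke estimate~\eqref{rach} from Section~\ref{powerbounds}.

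For the \emph{lower bound}, the observation is that every submultiplicative operator norm dominates the spectral radius. Since $T^\infty$ annihilates every trace-zero matrix, the restriction of $Z=(\id - T + T^\infty)^{-1}$ to the trace-zero Hermitian subspace agrees with $(\id - T)^{-1}$; the complex spectrum of the latter equals $\{1/(1-\lambda)\,:\,\lambda\in\Lambda\}$, of maximum modulus $1/\min_{\lambda\in\Lambda}|1-\lambda|$. Therefore $\kappa_{qu}\geq 1/\min_{\lambda\in\Lambda}|1-\lambda|$.

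For the \emph{upper bound}, I would regard $Z$ as the resolvent of $A:=T-T^\infty$ evaluated at $\zeta=1$. Three ingredients enter. First, $A$ is a linear endomorphism of the $n^2$-dimensional space $\cM_n$, so its minimal polynomial has degree $|m|\leq n^2$. Second, $\sigma(A)=\{0\}\cup\Lambda$: the identities $T\,T^\infty=T^\infty T=T^\infty$ together with the idempotency of $T^\infty$ show that $A$ vanishes on the stationary manifold and acts as $T$ on the complementary invariant subspace. Third, $A$ is power-bounded in the $1\to 1$ norm with constant $2$, since $A^k=T^k-T^\infty$ for $k\geq 1$ and the standard bounds $\|T^k\|_{1\to 1}\leq 1$ and $\|T^\infty\|_{1\to 1}\leq 1$ (trace-preserving positive maps are contractive in the $1\to 1$ norm) combine via the triangle inequality---exactly the observation already used in the classical case of this section. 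Plugging $C=2$, $\zeta=1$ and $|m|\leq n^2$ into~\eqref{rach} gives
\begin{align*}
\kappa_{qu}\;\leq\;\|Z\|_{1\to 1}\;\leq\;\frac{2\sqrt{16e-4}\,n^2}{\min_{\lambda\in\sigma(A)}|1-\lambda|}.
\end{align*}
The minimum on the right coincides with $\min_{\lambda\in\Lambda}|1-\lambda|$ in the quantitatively relevant regime of small spectral gap, since the extra eigenvalue $0\in\sigma(A)$ only contributes the value $1$ while every $\lambda\in\Lambda$ satisfies $|1-\lambda|\leq 2$ because $|\lambda|\leq 1$; this yields the stated bound.

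The \emph{main obstacle} is conceptual rather than technical: one must carefully verify the power-bound constant $2$ for the composite map $T-T^\infty$ in the $1\to 1$ norm, and take care in passing between $\sigma(T-T^\infty)$ and $\Lambda$, and, in the lower bound, between the real operator norm $\kappa_{qu}$ and the complex spectral radius of the complexification of $T|_{\text{tr-zero Herm}}$. Once these bookkeeping steps are in hand, the proof reduces to a direct substitution into~\eqref{rach}.
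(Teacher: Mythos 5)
Your proposal follows the paper's route exactly: the paper likewise treats $Z=(\id-T+T^\infty)^{-1}$ as the resolvent at $\zeta=1$ of the map $T-T^\infty$, which is power-bounded with constant $2$ in the $1\to1$ norm, and substitutes $C=2$ and $\abs{m}\le n^2$ into \eqref{rach}, deferring the remaining bookkeeping (including the lower bound via the spectral radius) to the earlier fixed-point perturbation paper \cite{wir}. The one loose end in your write-up is the passage from $\min_{\lambda\in\sigma(T-T^\infty)}\abs{1-\lambda}$ to $\min_{\lambda\in\Lambda}\abs{1-\lambda}$: when $\min_{\lambda\in\Lambda}\abs{1-\lambda}>1$ the extra eigenvalue $0$ of $T-T^\infty$ really does weaken your bound by up to a factor of $2$, so instead of appealing to the ``quantitatively relevant regime'' you should apply \eqref{rach} to the restriction of $T-T^\infty$ to the invariant traceless subspace --- where $T^\infty$ vanishes, the spectrum is exactly $\Lambda$, the minimal-polynomial degree is still at most $n^2$, and the power bound $2$ persists --- which is all that the supremum defining $\kappa_{qu}$ sees anyway.
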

The proof of the theorem is identical as in \cite{wir} up to an application of \eqref{rach} instead of the theorem in \cite{Rachid1}.
\begin{acknowledgements}
OS acknowledges financial support by the Elite Network of Bavaria (ENB) project QCCC and the
CHIST-ERA/BMBF project CQC. OS is thankful to Michael M. Wolf for creating conditions that made this work possible and to Alexander M\"{u}ller-Hermes for proofreading the manuscript and for pointing out the simpler proof for Lemma~\ref{combi1}. OS is equally thankful to E.\:B.\:Davies for valuable comments on a previous version of the manuscript.
\end{acknowledgements}

%
%



\end{document}